\theoremstyle{plain}% default
\newtheorem{theorem}{Theorem}
\newtheorem{lemma}{Lemma}[section]
\theoremstyle{definition}
\newtheorem{definition}{Definition}%[section]
\newtheorem*{condition}{Condition}
\theoremstyle{remark}
\newtheorem{remark}{Remark}%[section]
\def\R{\ensuremath{\mathbb R}}
\def\N{\ensuremath{\mathbb N}}
\def\Z{\ensuremath{\mathbb Z}}
\def\e{{\ensuremath{\rm e}}}
\def\l{{\rm Leb}}
\def\p{\ensuremath{\mathbb P}}
\def\QQ{\ensuremath{\mathscr Q}}
\def\ie{{\em i.e.}, }
\def\cv{\ensuremath{\text {Cor}}}
\def\o{\ensuremath{\underline{\omega}}}
\def\x{\ensuremath{\underline{\xi}}}
\def\dist{\ensuremath{\text{dist}}}
\def\eps{\varepsilon}
\def\spp{\text{SP\negmedspace}_{p,
\theta}}
\def\sm{\ensuremath{\mu_\varepsilon}}
\numberwithin{equation}{section}
\begin{document}

\title{Extreme value statistics for dynamical systems with noise}

\author[D. Faranda]{Davide Faranda}
\address{Davide Faranda\\ Klimacampus, Institute of Meteorology, University of Hamburg\\
Grindelberg 5, 20144\\ Hamburg \\Germany
}
\email{davide.faranda@zmaw.de}
\urladdr{http://www.mi.uni-hamburg.de/Davide-Faran.6901.0.html?\&L=3}

\author[J.M. Freitas]{Jorge Milhazes Freitas}
\address{Jorge Milhazes Freitas\\ Centro de Matem\'atica \& Faculdade de Ci\^encias da Universidade do Porto\\
Rua do Campo Alegre 687\\ 4169-007 Porto\\ Portugal
}
\email{jmfreita@fc.up.pt}
\urladdr{http://www.fc.up.pt/pessoas/jmfreita}

\author[V. Lucarini]{Valerio Lucarini}
\address{Valerio Lucarini\\ Klimacampus, Institute of Meteorology, University of Hamburg\\
Grindelberg 5, 20144\\ Hamburg \\Germany. -- Department of Mathematics and Statistics, University of Reading\\ Reading \\ UK
}
\email{valerio.lucarini@zmaw.de}
\urladdr{http://www.mi.uni-hamburg.de/index.php?id=6870\&L=3}

\author[G. Turchetti]{Giorgio Turchetti}
\address{Giorgio Turchetti\\ Department of Physics, University of Bologna.INFN-Bologna\\
Via Irnerio 46\\ Bologna \\Italy }
\email{turchetti@bo.infn.it}
\urladdr{http://www.unibo.it/SitoWebDocente/default.htm?UPN=giorgio.turchetti40unibo.it}

\author{ Sandro Vaienti}
\address{Sandro Vaienti\\ 
UMR-7332 Centre de Physique Th\'{e}orique, CNRS, Universit\'{e}
d'Aix-Marseille I, II, Universit\'{e} du Sud, Toulon-Var and FRUMAM,
F\'{e}d\'{e}ration de Recherche des Unit\'{e}s des Math\'{e}matiques de Marseille,
CPT Luminy, Case 907, F-13288 Marseille CEDEX 9}
\email {vaienti@cpt.univ-mrs.fr}

\thanks{{\em Acknowledgements} \ SSV was supported by the ANR-Project {\em Perturbations}, by the  CNRS-PEPS {\em Mathematical Methods of Climate Theory} and by the {\sc PICS} ( Projet International de Coop\'eration Scientifique), {\em Propri\'et\'es statistiques des syst\`emes dynamiques det\'erministes et al\'eatoires}, with the University of Houston,  n. PICS05968. Part of this work was done while he was visiting the {\em Centro de Modelamiento Matem\'{a}tico, UMI2807}, in Santiago de Chile with a CNRS support (d\'el\'egation). SV thanks H. Aytach for useful discussions. DF acknowledges Jeroen Wouters for useful discussions. DF and VL acknowledge the support of the NAMASTE project. NAMASTE has received funding from the European Research Council under the European Community's Seventh Framework Programme (FP7/2007-2013) / ERC Grant agreement No. 257106. JMF is thankful to A.C.M. Freitas and M. Todd for fruitful conversations. JMF was partially supported by FCT (Portugal) grant SFRH/BPD/66040/2009, by FCT project PTDC/MAT/099493/2008 and CMUP, which is financed by FCT through the programs POCTI and POSI, with national
and European Community structural funds. JMF and SV are also supported by FCT project PTDC/MAT/120346/2010, which is financed by national and European Community structural funds through the programs  FEDER and COMPETE. }

%\date{\today}

\keywords{Random dynamical systems, Extreme Values, Hitting Times Statistics, Extremal Index} \subjclass[2010]{
37A50, 60G70, 37B20, 60G10, 37A25, 37H99}

\begin{abstract}
We study the distribution of maxima ({\em Extreme Value Statistics}) for sequences of observables computed along orbits generated by random transformations. The underlying, deterministic, dynamical system can be regular or chaotic. In the former case, we will show that by perturbing rational or irrational rotations with additive noise, an extreme value law  appears, regardless of the intensity of the noise, while unperturbed rotations do not admit such limiting distributions. In the case of deterministic chaotic dynamics, we will consider observables specially designed to study the recurrence properties in the neighbourhood of periodic points. Hence, the exponential limiting law for the distribution of maxima is  modified by the presence of the {\em extremal index}, a positive parameter not larger than one, whose inverse gives the average size of the clusters of extreme events. The theory predicts that such a parameter is unitary when the system is perturbed randomly. We perform sophisticated numerical tests to assess how strong is the impact of noise level, when finite time series are considered. We find agreement with the asymptotic theoretical results but also non-trivial behaviour in the finite range. In particular our results suggest that in many applications where finite datasets can be produced or analysed one must be careful in assuming that the smoothing nature of noise prevails over the underlying deterministic dynamics.
\end{abstract}

\maketitle

\section{Introduction}

The main purpose of this paper is to study the extremal behaviour of randomly perturbed dynamical systems. By extremal behaviour we mean its statistical performance regarding the existence of Extreme Value Laws (EVLs), or in other words, the existence of distributional limits for the partial maxima of stochastic processes arising from such systems. In many aspects of natural and social sciences and engineering, the statistical properties of the extremes of a system are usually relevant tied with actual risk assessment. This is an element of why the theory of extremes has received such a great deal of attention over the years. The motivation for considering randomly perturbed systems follows from the fact that, very often, dynamical systems are used to model human activities or natural phenomena and the fact that errors made by observations usually take a random character which can be well described by the random perturbation formalism. On the other hand, random noise is often added in numerical modelling in order to represent the lack of knowledge on (or practical impossibility of representing) some of the processes taking place in the system of interest, often characterised by small spatial and/or temporal scales, whose explicit representation is virtually impossible. See \cite{palmer2010stochastic} for a comprehensive discussion on this issue in a geophysical setting. Also, noise is often considered as a ``good'' component to add when performing numerical simulations, because of its ability to smoothen the invariant measure and basically remove unphysical solutions \cite{lucarini2012stochastic}.

 The distributional properties of the maxima of stationary sequences is driven by the appearance of exceedances of high thresholds. For independent and identically distributed (i.i.d.) processes, the exceedances appear scattered through the time line. For dependent processes this may not necessarily hold and clustering of exceedances may occur. The Extremal Index (EI), which we will denote by $0\leq\theta\leq 1$, is a parameter that quantifies the amount of clustering. No clustering means that $\theta=1$ and strong clustering means that $\theta$ is close to 0. For deterministic hyperbolic systems, it has been shown, in \cite{FFT12,FP12,K12,AFV12}, that a dichotomy holds:  the EI is always 1 at every points except at the (repelling) periodic points where the EI $0<\theta<1$. We remark that Hirata \cite{H93} had already observed the two types of behaviour but not the dichotomy. In \cite{AFV12}, the authors proved the first results (up to our knowledge) regarding the existence of EVLs for piecewise expanding systems which are randomly perturbed by additive noise. They observed that adding noise has a ``smoothing'' effect in terms of eliminating all possible clustering of exceedances. In \cite{AFV12}, the authors show that, for the randomly perturbed systems considered, the EI is always 1. The main tool used there was decay of correlations against $L^1$ observables, which held for the unperturbed and perturbed systems considered. We remark that this property implies an excellent mixing behaviour of both systems.

One of the main achievements of this paper is the extension of the results in \cite{AFV12}, by showing that adding noise still has a smoothing effect when the unperturbed system is not mixing at all, or even worse: when the system is actually periodic. To be more precise, for unperturbed systems, we consider rotations on the circle (irrational or not) and show that by adding absolutely continuous noise with respect to the Haar measure, we can always prove the existence of EVLs with the EI being 1 everywhere. The analysis of these systems will be made using Fourier transforms to compute decay of correlations of specific observables. We observe that for the original unperturbed systems the lack of mixing does not allow to prove EVLs in the usual sense. Hence, here, we have a more drastic transition which motivates the question of whether it is possible to distinguish numerically the real nature of the underlying (unperturbed) systems when we look at the extremal statistics of the randomly perturbed data.

The analytical discussion is supported with numerical simulations devised to show how, for finite samples, the extremal behaviour may or not follow the asymptotic results. This has obvious relevance in terms of applications where the amount of affordable  statistics is limited by the available technology.  One of the key aspects surfacing from the numerical analysis of extremes that we carried out is that the estimation of some dynamical and geometrical properties of the underlying physical measure strongly depends on a combination of the intensity of the noise and the length of the data sample. There are two main experimental issues deriving from stochastic perturbations. The first concerns with the general claim that the addition of noise has a ``smoothing'' effect over the physical measure and enhances the chaoticity of the underlying deterministic dynamical systems. As we have mentioned, we expect to see this in terms of extremal behaviour, at least asymptotically. This claim will be weighed up against the simulations showing that we may need to integrate the system for a very long time to observe any changes in the measure structure. Moreover, for higher dimensional systems featuring stable and unstable manifolds, the stochastic perturbations should be directed on the stable direction so that the ``smoothing'' effect of the measure is more effective. The second issue is related to the level of noise needed to modify the deterministic properties and, in this case in particular, the sensitivity of the extremal type behaviour to the the noise level. To discuss this latter issue we will use the fact that the numerical round off is comparable to a random noise on the last precision digit \cite{knuth1973art}. This observation will allow us to claim that, for systems featuring periodic or quasi periodic motions (as the rotations on the circle), such a noise level is not sufficient for producing detectable changes regarding the observed extremal behaviour. On the other hand, the analysis for generic points of chaotic systems, such as the full shift on three symbols, will show that the intrinsic chaotic behaviour of the map dims the effect of the noise and makes the perturbed system indistinguishable from a deterministic one. If, instead, the analysis is carried out at periodic points, the effect of the perturbation will be easily noticeable in the disruption of periodicity and its consequent clustering of exceedances, which can be detected by estimation of the EI. The disruption of periodicity will be further analysed through simulations on the quadratic map. We will show, for finite sample behaviour, how the EI depends on the intensity of the noise.  We will conclude the analysis suggesting a way to analyse the disruption of periodicity in general systems where  the nature of the noise may be unknown and by analysing the behaviour of the Pomeau-Manneville map introduced in  \cite{PM80} as example of intermittent chaos where the issues of having chaotic and regular behaviour coexists in certain parameter ranges.

We will also present some numerical simulations on a Lorenz map and the H\'enon map The former features a complicated structure of the bifurcation diagram which allow for analysing the role of noise addition when a system undergoes bifurcations. The latter  features a stable and an unstable directions, homoclinic tangencies and the coexistence of two different attractors. In a previous paper we have shown that it is possible to relate the EVL parameters to the local dimension of the attractor of a system possessing a singular invariant physical measure \cite{lucarini2012extreme}. This latter experiments will be the gateway to go beyond the maps presented in this paper and to suggest a general procedure to analyse issues related to the stochastic perturbations of dynamical systems in order to frame the analysis carried out here  in a more general setup. In fact, we find that it is far from trivial to detect the smoothing effect of noise on the invariant measure when finite time series are considered.

We would like to remark that such considerations have great relevance in the context of Axiom A systems and having statistical mechanical applications in mind when addressing the problem of the applicability of the fluctuation-dissipation theorem (FDT). While in the deterministic case the non-smooth nature of the measure along the stable manifold makes it impossible to apply straightforwardly the FDT (see discussion in \cite{ruelle1997differentiation, ruelle1998general, ruelle2009review, lucarini2008response, lucarini2011a}), the addition of even a very small amount of random forcing in principle ``cures'' the singularities of the measure and makes sure that in principle the FDT can be used to related forced and free motions (see \cite{abramov2007blended, lacorata2007fluctuation}). Our results suggest that one must be careful in assuming that such a desirable effect is really detectable when finite datasets are analysed.

\section{Extreme values, the system and the perturbation}

Our main purpose is to study the extremal behaviour of randomly perturbed dynamical systems. This will be done by analysing the partial maxima of stationary stochastic processes arising from such systems. We will start by presenting the main concepts regarding the extreme values on a general framework of stationary stochastic processes. Then we will turn to the construction of stationary stochastic processes deriving from dynamical systems and their random perturbations by additive noise, which will be the object of our study.  

\subsection{Extreme values -- definitions and concepts} 

Let $X_0, X_1, \ldots$ be a stationary stochastic process.
We denote by $F$ the cumulative distribution function (d.f.) of $X_0$, \ie $F(x)=\p(X_0\leq x)$. Given any d.f. $F$, let $\bar{F}=1-F$ and $u_F$ denote the right endpoint of the d.f. $F$, \ie
$
u_F=\sup\{x: F(x)<1\}.
$ We say we have an exceedance of the threshold $u<u_F$ whenever
%\begin{equation}
%\label{def:U-u}
$U(u):=\{X_0>u\}$
%\end{equation}
occurs. We define the sequence of partial maxima  $M_1, M_2,\ldots$ given by
%\begin{equation}
%\label{eq:Mn-definition}
$M_n=\max\{X_0,\ldots,X_{n-1}\}.$
%\end{equation}

\begin{definition}
We say that we have an \emph{Extreme Value Law} (EVL) for $M_n$ if there is a non-degenerate d.f. $H:\R\to[0,1]$ with $H(0)=0$ and,  for every $\tau>0$, there exists a sequence of levels $u_n=u_n(\tau)$, $n=1,2,\ldots$,  such that
\begin{equation}
\label{eq:un}
  n\p(X_0>u_n)\to \tau,\;\mbox{ as $n\to\infty$,}
\end{equation}
and for which the following holds:
%\begin{equation}
%\label{eq:EVL-law}
$\p(M_n\leq u_n)\to \bar H(\tau)$, as $n\to\infty$,
%\end{equation}
where the convergence is meant in the continuity points of $H(\tau)$.
\end{definition}

The motivation for using a normalising sequence $u_n$ satisfying \eqref{eq:un} comes from the case when $X_0, X_1,\ldots$ are independent and identically distributed (i.i.d.). In this i.i.d. setting, it is clear that $\p(M_n\leq u)= (F(u))^n$, where $F$ is the d.f. of $X_0$, \ie $F(x):=\p(X_0\leq x)$. Hence, condition \eqref{eq:un} implies that
\begin{equation}
\label{eq:iid-maxima}
\p(M_n\leq u_n)= (1-\p(X_0>u_n))^n\sim\left(1-\frac\tau n\right)^n\to\e^{-\tau},
\end{equation}
as $n\to\infty$. Moreover, the reciprocal is also true (see \cite[Theorem~1.5.1]{LLR83} for more details). Note that in this case $H(\tau)=1-\e^{-\tau}$ is the standard exponential d.f.

When $X_0, X_1, X_2,\ldots$ are not independent, exceedances of high thresholds may have a tendency to appear in clusters, which creates the appearance of a parameter $\theta$ in the exponential law, called the Extremal Index: 
% but satisfy some mixing condition $D(u_n)$ introduced by Leadbetter in \cite{L73} then something can still be said about $H$. Let $F_{i_1,\ldots,i_n}$%(x_1,\ldots,x_n)$
%denote the joint d.f. of $X_{i_1},\ldots,X_{i_n}$, and set
%$F_{i_1,\ldots,i_n}(u)=F_{i_1,\ldots,i_n}(u,\ldots,u)$.
%\begin{condition}[$D(u_n)$]\label{cond:D} We say that $D(u_n)$ holds
%for the sequence $X_0,X_1,\ldots$ if for any integers
%$i_1<\ldots<i_p$ and $j_1<\ldots<j_k$ for which $j_1-i_p>m$, and any
%large $n\in\N$,
%\[
%\left|F_{i_1,\ldots,i_p,j_1,\ldots,j_k}(u_n)-F_{i_1,\ldots,i_p}(u_n)
%F_{j_1,\ldots,j_k}(u_n)\right|\leq \gamma(n,m),
%\]
%where $\gamma(n,m_n)\xrightarrow[n\to\infty]{}0$, for some sequence
%$m_n=o(n)$.
%\end{condition}
%If $D(u_n)$ holds for $X_0,X_1,\ldots$ and the limit \eqref{eq:EVL-law} exists for some $\tau>0$ then there exists $0\leq\theta\leq1$ such that $\bar H(\tau)=\e^{-\theta\tau}$ for all $\tau>0$ (see \cite[Theorem~2.2]{L83} or \cite[Theorem~3.7.1]{LLR83}).
\begin{definition}
We say that $X_0,X_1,\ldots$ has an \emph{Extremal Index} (EI) $0\leq\theta\leq1$ if we have an EVL for $M_n$ with $\bar H(\tau)=\e^{-\theta\tau}$ for all $\tau>0$.
\end{definition}

%The notion of the EI was latent in the work of Loynes \cite{L65} but was established formally by Leadbetter in \cite{L83}. It gives a measure of the strength of the dependence of $X_0,X_1,\ldots$, so that $\theta=1$ indicates that the process has practically no memory while $\theta=0$, conversely, reveals extremely long memory.  Another way of looking at 
We can say that, the EI is that measures the strength of clustering of exceedances in the sense that, most of the times,  %it gives some indication on how much exceedances of high levels have a tendency to ``cluster''. 
it can be interpreted as the inverse of the average size of the clusters of exceedances. In particular, if $\theta=1$ then the exceedances appear scattered along the time line without creating clusters.
% 
%Namely, for $\theta>0$ this interpretation of the EI is that $\theta^{-1}$ is the mean number of exceedances of a high level in  a cluster of large observations, \ie is the ``mean size of the clusters''.

In order to prove the existence of an EVL corresponding to an EI equal to 1, for general stationary stochastic processes, Leadbetter  \cite{L73} introduced conditions $D(u_n)$ and $D'(u_n)$, which are some sort of mixing and anti clustering conditions, respectively. However, since the first of these conditions is too strong to be checked when the stochastic processes are dynamically generated,  
%In the dependent case, as shown in \cite{L73}, if $D(u_n)$ holds and in addition an anti clustering condition $D'(u_n)$ (see definition below) also holds,  one can show that the EI is 1, which means that $\bar H(\tau)=\e^{-\tau}$, as in the independent case. However, since the rates of mixing for dynamical systems are usually given by decay of correlations of observables in certain given classes of functions, it turns out that condition $D(u_n)$ is too strong to be checked for chaotic systems whose mixing rates are known only through decay of correlations. For that reason, 
motivated by Collet's work \cite{C01}, in \cite{FF08a} the authors suggested a condition $D_2(u_n)$ which together with $D'(u_n)$ was enough to prove the existence of an exponential EVL for maxima. In fact, \cite[Theorem~1]{FF08a} states that if the following conditions  hold for $X_0, X_1,\ldots$ then there exists an EVL for $M_n$ and $H(\tau)=1-e^{-\tau}$. 
\begin{condition}[$D_2(u_n)$]\label{cond:D2} We say that $D_2(u_n)$ holds for the sequence $X_0,X_1,\ldots$ if for all $\ell,t$
and $n$,
%\begin{equation}\label{D1}
$|\p\left(X_0>u_n\cap
  \max\{X_{t},\ldots,X_{t+\ell-1}\leq u_n\}\right)-\p(X_0>u_n)
  \p(M_{\ell}\leq u_n)|\leq \gamma(n,t),$
%\end{equation}
where $\gamma(n,t)$ is decreasing in $t$ for each $n$ and
$n\gamma(n,t_n)\to0$ when $n\rightarrow\infty$ for some sequence
$t_n=o(n)$.
\end{condition}
Now, let $(k_n)_{n\in\N}$ be a sequence of integers such that
\begin{equation}
\label{eq:kn-sequence-1}
k_n\to\infty\quad \mbox{and}\quad  k_n t_n = o(n).
\end{equation}
\begin{condition}[$D'(u_n)$]\label{cond:D'} We say that $D'(u_n)$
holds for the sequence $X_0$, $X_1$, $X_2$, $\ldots$ if there exists a sequence $\{k_n\}_{n\in\N}$ satisfying \eqref{eq:kn-sequence-1} and such that
%\begin{equation}
%\label{eq:D'un}
$\lim_{n\rightarrow\infty}\,n\sum_{j=1}^{\lfloor n/k_n \rfloor}\p( X_0>u_n,X_j>u_n)=0.$
%\end{equation}
\end{condition}

Condition $D_2(u_n)$ is  much weaker than the original $D(u_n)$, and it is easy to show that it follows easily from sufficiently fast decay of correlations (see \cite[Section~2]{FF08a}).

%In the results mentioned above, condition $D'(u_n)$ prevented the existence of clusters of exceedances, which implies  for example that the EVL was a standard exponential $\bar H(\tau)=\e^{-\tau}$. However, 
When $D'(u_n)$ does not hold, clustering of exceedances is responsible for the appearance of a parameter $0<\theta<1$ in the EVL which now is written as $\bar H(\tau)=\e^{-\theta \tau}$. 

In \cite{FFT12},  the authors established a connection between the existence of an EI less than 1 and periodic behaviour. This was later generalised for rare events point processes in \cite{FFT13}. %To be more specific, one of the main results in \cite{FFT12} states that for deterministically, dynamically defined stochastic processes as in \eqref{eq:def-stat-stoch-proc-DS}, where $z$ is a repelling periodic point, under some conditions on the dependence structure of the process, there is an EVL for $M_n$ with an EI $\theta$ given by the expansion rate at the repelling periodic point $z$. 
Under the presence of periodic phenomena, the inherent rapid recurrence creates clusters of exceedances which makes it easy to check that condition $D'(u_n)$ fails (see \cite[Section~2.1]{FFT12}). %This was a serious obstacle for the application to dynamical systems since the theory developed up to \cite{FFT12} was based on Collet's important observation that $D'(u_n)$ could be used not only in the usual way as in Leadbetter's approach, but also to compensate the weakening of the original $D(u_n)$, which allowed the application to chaotic systems with sufficiently fast decay of correlations. 
To overcome this difficulty, in \cite{FFT12}, the authors considered the annulus 
\begin{equation}
\label{eq:def-Qp}
Q_p(u):=\{X_0>u, \;X_p\leq u\}
\end{equation} resulting from removing from $U(u)$ the points that were doomed to return after $p$ steps, which form the smaller ball $U(u)\cap f^{-p}(U(u))$. %In \cite{FFT12,FFT13}, the occurrence of $Q_p(u)$ was named an \emph{escape} since it corresponds to the realisations escaping the influence of the underlying periodic phenomena and exit the ball $U(u)$ after $p$ iterates. 
Then, the main crucial observation in \cite{FFT12} is that  the limit law corresponding to no entrances up to time $n$ into the ball $U(u_n)$ was equal to the limit law corresponding to no entrances into the annulus $Q_p(u_n)$ up to time $n$ (see \cite[Proposition~1]{FFT12}). This meant that, roughly speaking, the role played by the balls $U(u)$ could be replaced by that of the annuli $Q_p(u)$, with the advantage that points in $Q_p(u)$ were no longer destined to return after just $p$ steps. 

Based in this last observation, in \cite{FFT12}, the authors adapted conditions $D_2(u_n)$ and $D'(u_n)$ in order to obtain an EVL at repelling periodic points. 
%The first one establishes exactly the type of periodic behaviour assumed, namely:
%\begin{condition}[$\spp(u_n)$]\label{cond:SP} We say that  $X_0,X_1,X_2,\ldots$ satisfies condition $\spp(u_n)$ for $p\in \N$ and $\theta\in [0,1]$ if
%%\begin{equation}
%%\label{cond:periodicity}
%$\lim_{n\to \infty}\sup_{1\le j<p}\p(X_j>u_n| X_0>u_n)=0$ , $\lim_{n\to \infty} \p(X_p>u_n| X_0>u_n)\to (1-\theta)$
%%\end{equation}
%and moreover
%%\begin{equation}
%%\label{cond:summability}
%$\lim_{n\to \infty}\sum_{i=0}^{[\frac{n-1}p]} \p(X_0>u_n, X_p>u_n, X_{2p}>u_n,\ldots,X_{ip}>u_n)=0.$
%%\end{equation}
%\end{condition}
%Condition \eqref{cond:periodicity}, when $\theta<1$, imposes some sort of periodicity of period $p$ among the exceedances of high levels $u_n$, since if at some point the process exceeds the high level $u_n$, then, regardless of how high $u_n$ is, there is always a strictly positive probability of another exceedance occurring at the (finite) time $p$.  In fact, if the process is generated by a deterministic dynamical system $f:\X\to \X$ and $f$ is continuous then \eqref{cond:periodicity} implies that $z$ is a periodic point of period $p$, \ie $f^p(z)=z$.
In fact, the next two conditions %concern to the dependence structure of $X_0, X_1,\ldots$ and 
can be described as being obtained from $D_2(u_n)$ and $D'(u_n)$ by replacing balls by annuli.  Let $\mathscr Q_i(u_n):=\bigcap_{j=0}^{i-1} f^{-j}(Q_p(u_n)^c)$. Note that while the occurrence of the event $\{M_n\leq u_n\}$ means that no entrance in the ball $\{X_0>u_n\}$ has occurred up to time $n$, the occurrence of $\mathscr Q_n(u_n)$ means that no entrance in the annulus $Q_p(u_n)$ has occurred up to time $n$.    
\begin{condition}[$D^p(u_n)$]\label{cond:Dp}We say that $D^p(u_n)$ holds
for the sequence $X_0,X_1,X_2,\ldots$ if for any integers $\ell,t$
and $n$
$\left|\p\left(Q_{p}(u_n)\cap
  f^{-t}(\QQ_{\ell}(u_n)\right)-\p(Q_{p}(u_n))
  \p(\QQ_{\ell}(u_n))\right|\leq \gamma(n,t),
$
where $\gamma(n,t)$ is non increasing in $t$ for each $n$ and
$n\gamma(n,t_n)\to0$ as $n\rightarrow\infty$ for some sequence
$t_n=o(n)$.  %, which means that $t_n/n\to0$ as $n\to \infty$.
\end{condition}
%This condition requires some sort of mixing by demanding that an escape at time $0$ is an event which gets more and more independent from an event corresponding to no escapes during some period, as the time gap between these two events gets larger and larger.  The main advantage of this condition when compared to Leadbetter's $D(u_n)$ (or others of the same sort) is that it follows directly from sufficiently fast decay of correlations as observed in Section~3.3 of \cite{FFT12}, on the contrary to $D(u_n)$. 
Again the above condition can easily be checked from sufficiently fast decay of correlations (see \cite[Section~3.3]{FFT12}).
%Assuming $D^p(u_n)$ holds,% let $(k_n)_{n\in\N}$ %\begin{equation}
%\label{eq:kn-sequence}
%k_n\to\infty\quad \mbox{and}\quad  k_n t_n = o(n). %\frac{k_n t_n}{n}\to0\quad\mbox{as $n\to\infty$.}
%\end{equation}

\begin{condition}[$D'_p(u_n)$]\label{cond:D'p} We say that $D'_p(u_n)$
holds for the sequence $X_0,X_1,X_2,\ldots$ if there exists a sequence $\{k_n\}_{n\in\N}$ satisfying \eqref{eq:kn-sequence-1} (with $t_n$ given by condition $D^p(u_n)$) such that
%\begin{equation}
%\label{eq:D'rho-un}
$\lim_{n\rightarrow\infty}\,n\sum_{j=1}^{[n/k_n]}\p( Q_{p}(u_n)\cap
f^{-j}(Q_{p}(u_n)))=0.$
%\end{equation}
\end{condition}
%This last condition is very similar to Leadbetter's $D'(u_n)$ from \cite{L83}, except that instead of preventing the clustering of exceedances it prevents the clustering of escapes by requiring that they should appear scattered fairly evenly through the time interval from $0$ to $n-1$.

In \cite[Theorem~1]{FFT12}, it was proved that if a stationary stochastic process satisfies conditions $\spp(u_n)$, $D^p(u_n)$ and $D'_p(u_n)$ then we have an EVL for $M_n$ with $\bar H(\tau)=\e^{-\theta\tau}$.

\subsection{Stochastic processes arising from randomly perturbed systems}
To simplify the exposition we will consider one dimensional maps $f$ on the circle $\mathbb{S}^1$ or on the unit interval $I$ (from now on we will use $I$ to identify both spaces),  provided, in the latter case, that the image $f(I)$ is strictly included into $I$. We will perturb them with additive noise, namely, we introduce the family of maps $$
f_{\omega}(x)=f(x)+\omega
$$
where we have to take the mod-$1$ operation if the map is considered on the circle. The quantity $\omega$ is chosen on the interval $\Omega_{\eps}=[-\eps,\eps]$ with distribution $\vartheta_{\eps}$, which we will take equivalent to Lebesgue on $\Omega_{\eps}$. Consider now an i.i.d.  sequence ${\omega_k}, \ k\in \mathbb{N}$ taking values on the interval $\Omega_{\eps}$ and distributed according to $\vartheta_{\eps}$. We construct the random orbit (or {\em random transformation}) by the concatenation

\begin{equation}
\label{eq:concatenated}
f_{\o}^n(x)=f_{\omega_n}\circ f_{\omega_{n-1}}\circ\cdots\circ f_{\omega_1}(x).
\end{equation}
 The role of the invariant measure is now played by the stationary measure $\mu_{\eps}$ which is defined as
\[
\iint \phi(f_{\o}(x)) \,d\mu_\varepsilon(x)\,d\vartheta_\varepsilon^\N(\o)=\int \phi(x)\,d\mu_\varepsilon(x),
\]
for every $\phi\in L^{\infty}$ ($L^{\infty}$ to be intended with respect to the
Lebesgue measure \l)\footnote{This choice is dictated by the fact that the stationary measure will be equivalent to Lebesgue in all the examples considered below}.
The previous equality could also be written as
$
\int {\mathcal U}_{\eps}\phi \ d\mu_{\eps}=\int \phi \ d\mu_{\eps}
$
where the operator ${\mathcal U}_{\eps}: L^{\infty}\rightarrow
L^{\infty}$, is defined as $({\mathcal U}_{\eps}\phi)(x)=\int
_{\Omega_{\eps}}\phi(f_{\omega}(x))d\vartheta _{\eps}$ and it is called
the {\em random evolution operator}. \\The decay of correlations of the perturbed systems could be formulated in terms of the random evolution operator. More precisely, we will take two non-zero observables, $\phi$ and $\psi$ and we will suppose that $\phi$ is of bounded variation with norm $||\cdot||_{BV}$ and $\psi\in L^1_m$\footnote{Of course we could do other choices, but this two spaces will play a major role in the subsequent theory.} Then the correlation integral is
$$
\cv_\eps(\phi,\psi, n):=\frac{1}{\|\phi\|_{BV}\|\psi\|_{L^1_m}} \left|\int {\mathcal U}_{\eps}^n\phi~\psi\, d\sm-\int  \phi\,
d\sm\int \psi\, d\sm\right|
$$
In the following we will use the product measure $\p :=\mu_{\eps} \times \vartheta_{\eps}^{\mathbb{N}}$.\\

We are now in condition of  defining the time series $X_0$, $X_1$, $X_2$,\dots arising from our system simply by evaluating a given observable  $\varphi:I\to\R\cup\{+\infty\}$ along the random orbits of the system:
\begin{equation}
\label{eq:def-rand-stat-stoch-proc-RDS2} X_n=\varphi\circ f^n_{\o},\quad \mbox{for
each } n\in {\mathbb N},
\end{equation}
We assume that $\varphi$ achieves a global maximum at $z\in I$; for every $u<\varphi(z)$ but sufficiently close to $\varphi(z)$, the event $\{y\in I:\; \varphi(y)>u\}=\{X_0>u\}$ corresponds to a topological ball ``centred'' at $z$ and, for every sequence $(u_n)_{n\in\N}$ such that $u_n\to \varphi(z)$, as $n\to\infty$, the sequence of balls $\{U_n\}_{n\in\N}$ given by
%\begin{equation}
%\label{def:Un}
$U_n:=\{X_0>u_n\}$
%\end{equation} 
is a nested sequence of sets such that
%\begin{equation}
%\label{def:zeta}
$\bigcap_{n\in\N} U_n=\{z\}.$
%\end{equation}

As explained in \cite{FF08a} the type of asymptotic distribution obtained depends on the chosen observables. In the simulations, we will cover the observables such that 
\begin{equation}
\label{eq:observable-shape}
\varphi(\cdot)=g(\dist(\cdot,z)),
\end{equation} where $\dist$ denotes a certain metric chosen on $I$ and $g$ is of one of the following three types:
\begin{enumerate}
\item $g_1(y)=-\log(y)$ to study the convergence to the Gumbel law.
\item $g_2(y)= y^{-1/a} \qquad a \in \mathbb{R} \quad a>0$ for the Fr\'echet law.
\item $g_3(y)= C -y^{1/a} \qquad a \in \mathbb{R}\quad a>0 \quad C \in \mathbb{R}$ for the Weibull distribution.
\end{enumerate}

%
%\begin{remark}
%\label{rem:sequences-un}
%The sequences of real numbers $u_n=u_n(\tau)$, $n=1,2,\ldots$, are usually taken to be one parameter linear families such as $u_n=a_n y +b_n$, where $y\in\R$ and $a_n>0$, for all $n\in\N$. Observe that $\tau$ depends on $y$ through $u_n$ and, in fact, in the i.i.d. case, depending on
%the tail of the marginal d.f. $F$, we have that $\tau=\tau(y)$ is
%of one of the following three types (for some $\alpha>0$):
%\begin{equation*}\tau_1(y)=\e^{-y} \text{ for } y \in
%\mathbb{R},\quad \tau_2(y)=y^{-\alpha} \text{ for } y>0\quad \text{ and }\quad
%\tau_3(y)=(-y)^{\alpha} \text{ for } y\leq0.
%%\label{eq:tau}
%\end{equation*}
%We suggest \cite[Theorem~1.6.2]{LLR83} for sufficient and necessary conditions on the tail of the d.f. $F$ in order to obtain the respective domain of attraction for maxima. Besides, on \cite[Corollary~1.6.3]{LLR83} one can find specific formula  for the normalizing constants $a_n$ and $b_n$ so that the respective extreme limit law applies.
%\end{remark}

The sequences of real numbers $u_n=u_n(\tau)$, $n=1,2,\ldots$, are usually taken to be as one parameter linear families like $u_n= y/a_n +b_n$, where $y\in\R$ and $a_n>0$, for all $n\in\N$. In fact, in the classical theory, one considers the convergence of probabilities of the form 
$
\p(a_n(M_n-b_n)\le y).
$
%which may be rewritten as $\p(M_n\le u_n)$, where $u_n=u_n(y)=\frac{y}{a_n}+b_n.$ 
In this case, the {\em Extremal Types Theorem} says that, whenever the variables $X_i$ are i.i.d, if for some constants $a_n>0$, $b_n$, we have 
\begin{equation}
\p(a_n(M_n-b_n)\le y)\rightarrow G(y),
\end{equation}
where the convergence occurs at continuity points of $G$, and $G$ is non degenerate, then $G$ belongs to one of three extreme values types (see below).
Observe that $\tau$ depends on $y$ through $u_n$ and, in fact, in the i.i.d. case, depending on
the tail of the marginal d.f. $F$, we have that $\tau=\tau(y)$ is
of one of the following three types (for some $\alpha>0$):
\begin{equation}\tau_1(y)=\e^{-y} \text{ for } y \in
\mathbb{R},\quad \tau_2(y)=y^{-\alpha} \text{ for } y>0\quad \text{ and }\quad
\tau_3(y)=(-y)^{\alpha} \text{ for } y\leq0.
\label{eq:tau}
\end{equation}
%It is interesting to point out that there is another theorem (1.6.2 in Leadbetter et al., for instance), which gives necessary and sufficient conditions  for the d.f. $F$ of the random variables of the sequence $X_i$ to belong to each of the previous three types. This result allows, in particular, to compute explicitly the constants $a_n$ and $b_n$ for each of the three cases and in terms of the d.f. $F$ (see Corollary 1.6.3 in Leadbetter et al): 
In  \cite[Theorem~1.6.2]{LLR83}, it were given sufficient and necessary conditions on the tail of the d.f. $F$ in order to obtain the respective domain of attraction for maxima. Besides, in  \cite[Corollary~1.6.3]{LLR83} one can find specific formulas  for the normalising constants $a_n$ and $b_n$ so that the respective extreme limit laws apply. We used these formulas to perform the numerical computations in this paper. 

\begin{remark}
\label{rem:relation-iid-EI}
We emphasise that, for i.i.d. sequences, the limiting distribution type of the partial maxima is completely determined by the tail of the d.f. $F$. For the stationary stochastic processes  considered here, if an EI $\theta>0$ applies, then the same can still be said about the limiting distribution type of the partial maxima: namely, it is completely determined by the tail of the d.f. $F$. This statement follows from the equivalence between (\ref{eq:un}) and (\ref{eq:iid-maxima}) and the definition of the EI. However, we also quote \cite[Corollary~3.7.3]{LLR83} because we will refer to it later: 

If for $X_0, X_1,\ldots$ we have an EI $\theta>0$, then if we considered an i.i.d. sequence $Z_0, Z_1,\ldots$ so that the d.f. of $Z_0$ is $F$, the same as that of $X_0$, and let $\hat M_n=\max\{Z_0,\ldots,Z_{n-1}\}$, then the existence of normalizing sequences $(a_n)_{n\in\N}$ and $(b_n)_{n\in\N}$ for which $\lim_{n\to\infty}\p(a_n(\hat M_n-b_n)\leq y)= G(y)$ implies that $\lim_{n\to\infty}\p(a_n(M_n-b_n)\leq y)=G^\theta(y),$ and the reciprocal is also true. Moreover, since by  \cite[Corollary~1.3.2]{LLR83}  $G^\theta$ is of the same type of $G$, we can actually make a linear adjustment to the normalizing sequences $(a_n)_{n\in\N}$ and $(b_n)_{n\in\N}$ so that the second limit is also $G$.
\end{remark}

\begin{remark}
\label{rem:F-dependence}
From Remark~\ref{rem:relation-iid-EI}, in order to determine the type of extremal distribution $G$ (recall that $G(y)=\e^{\tau(y)}$, where $\tau(y)$ is of one of the three types described in (\ref{eq:tau})) which applies to our stochastic processes $X_0, X_1,\ldots$, one needs to analyse the tail of the d.f. $F$. The choice of the observables in (\ref{eq:observable-shape}) implies that the shape of $g$ determines the type of extremal distribution we get. In particular, for observables of type $g_i$ we get an extremal law of type $\e^{\tau_i}$, for $i=1,2,3$. (See \cite[Remark~1]{FFT10} for more details on this correspondence). While the type of the extremal distribution is essentially determined by the shape of the observable, in the cases when types 2 and 3 apply, \ie the Fr\'echet and Weibull families of distributions, respectively, the exponent $\alpha$ is also influenced by other quantities such as the EI and the local dimension of the stationary (invariant) measure $\mu_\epsilon$ ($\mu$). In particular, when such measure is absolutely continuous with respect to Lebesgue and its Radon-Nikodym derivative has a singularity at $z$, then the order of the singularity also influences the value of $\alpha$.    
\end{remark}

\section{Rotations on the circle}
\label{sec:Rotations}

In this section we  study the rotations of the circle and show that the effect of adding noise is enough to create enough randomness in order to make EVLs appear when they do not hold for the original system. This isthe content of Theorem~\ref{th:EVLs-rotations} below. Since this nice statistical behaviour appears solely on account of the noise, it is not surprising that the numerical simulations show that for very small noise one needs a large amount of data to the detect the EVLs. 

\subsection{Analytical results}\label{subsec:rotations-analytical}

In \cite[Theorem~C]{AFV12},  the following result has been proved  essentially for piecewise expanding maps randomly  perturbed  like above. 

To be more precise, let us suppose that the unperturbed map $f$ is continuous on the circle and that \footnote{The result is even more general and applies to multidimensional maps too, but for our concerns, especially for  rotations, the 1-D case is enough. We will discuss later  about generalisation to piecewise continuous maps}:\\
 (i) the correlation integral Cor$_{\eps}(\phi, \psi, n)$ decays at least as $n^{-2}$;\\
 (ii)  $u_n$ satisfies:  $n\p(X_0>u_n)\rightarrow \tau, \ \mbox{as} \ n\rightarrow \infty$;\\
 (iii)  $U_n=\{X_0>u_n\}$ verifies $\bigcap_{n\in\mathbb{N}}U_n=\{z\}$;\\
 (iv) There exists $\eta>0$ such that $d(f(x), f(y))\le \eta \ d(x,y)$, where $d(\cdot, \cdot)$ denotes some metric on $I$, \\
 then the process $X_0, X_1,\cdots$ satisfies $D_2(u_n)$ and $D'(u_n)$, and this implies that the EVL holds for $M_n$ so that $\bar{H}(\tau)=e^{-\tau}.$
 The proof strongly relies on the decay of correlations for $L^1(\l)$ observables. \\
 
In this section we show that such a result holds also for rotations (irrational or not), perturbed with additive noise. The key observation is that in the proof of \cite[Theorem~C]{AFV12}, the observables  entering the correlation integral are characteristic functions of intervals (see also \cite[Remark~3.1]{AFV12}) and for such observables it is possible to prove an exponential decay of correlations for perturbed rotations by using the Fourier series technique. 
 
In what follows we identify $\mathbb{S}^1=\R/\Z$.

\begin{theorem}
\label{th:EVLs-rotations}
Let $f:\mathbb{S}^1\to\mathbb{S}^1$ be a rotation of angle $\alpha\in\R$, \ie $f(x)=x+\alpha\;\mbox{mod }1$. We perturb $f$ additively so that the random evolution evolution of an initial state $x\in\mathbb{S}^1$ is given by  \eqref{eq:concatenated}, with $\vartheta_\eps$ denoting the uniform distribution on $[-\eps,\eps]$. Let $X_0, X_1,\ldots$ be a stationary stochastic process generated by the random evolution of such $f$, as in \ref{eq:def-rand-stat-stoch-proc-RDS2}. Let $(u_n)_{n\in\N}$ be a sequence such that items (ii) and (iii) above hold. Then, the process $X_0, X_1,\ldots$ satisfies conditions $D_2(u_n)$ and $D'(u_n)$ which implies that there exists an EVL for $M_n$ with EI equal to $1$, \ie $\bar H(\tau)=\e^{-\tau}$.
\end{theorem}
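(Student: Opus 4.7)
The plan is to follow the argument of \cite[Theorem~C]{AFV12} line by line, replacing only its $L^1(\l)$-decay-of-correlations input---which fails for a rotation---by a Fourier-series estimate specifically tailored to the observables that actually appear in the proofs of $D_2(u_n)$ and $D'(u_n)$. Of the four hypotheses of \cite[Theorem~C]{AFV12}, items (ii) and (iii) are in our assumptions, item (iv) is trivially met by the isometry $f$ with $\eta=1$, and all the work goes into supplying a suitable substitute for item (i).

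First, Lebesgue measure $\l$ on $\mathbb{S}^1$ is stationary for $f_{\o}$, since both $f(x)=x+\alpha\bmod 1$ and convolution with $\vartheta_\eps$ preserve $\l$; in particular $\sm=\l$ and Parseval's identity is at our disposal. A direct computation shows that ${\mathcal U}_\eps$ acts diagonally on Fourier modes, namely
\[
\widehat{{\mathcal U}_\eps^n\phi}(k)=e^{2\pi ikn\alpha}\bigl(\hat\vartheta_\eps(k)\bigr)^n\hat\phi(k),\qquad \hat\vartheta_\eps(k)=\frac{\sin(2\pi k\eps)}{2\pi k\eps}.
\]
Since $|\sin x/x|<1$ for $x\neq 0$ and $|\hat\vartheta_\eps(k)|\to 0$ as $|k|\to\infty$, the quantity $\rho_\eps:=\sup_{k\neq 0}|\hat\vartheta_\eps(k)|$ is strictly less than $1$. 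Parseval together with Cauchy--Schwarz then yield the Fourier decay of correlations
\[
\Bigl|\int({\mathcal U}_\eps^n\phi)\,\psi\,d\l-\int\phi\,d\l\int\psi\,d\l\Bigr|\le\rho_\eps^n\|\phi\|_{L^2}\|\psi\|_{L^2}
\]
for all $\phi,\psi\in L^2(\mathbb{S}^1)$.

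To check $D_2(u_n)$, I use the standard conditional-independence decomposition (as in \cite{AFV12}) to rewrite its left-hand side as the correlation $\bigl|\int\I_{U_n}{\mathcal U}_\eps^t(g_{n,\ell})\,d\l-\int\I_{U_n}d\l\int g_{n,\ell}\,d\l\bigr|$, where $g_{n,\ell}(y):=\p(\max\{\varphi(y),\varphi(f_{\omega_1}(y)),\ldots,\varphi(f_{\omega_{\ell-1}}\circ\cdots\circ f_{\omega_1}(y))\}\le u_n)$ has $L^\infty$-norm at most $1$. The Fourier estimate gives $\gamma(n,t)\le\rho_\eps^t\sqrt{\l(U_n)}$; taking $t_n=c\log n$ with $c$ large enough forces $n\gamma(n,t_n)\to 0$ while $t_n=o(n)$, so $D_2(u_n)$ is satisfied.

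For $D'(u_n)$, the naive Fourier bound leaves a stubborn tail of order $\tau\rho_\eps/(1-\rho_\eps)$ after summation, so I argue differently. By Young's inequality on $\mathbb{S}^1$, the $j$-fold convolution density of the wrapped uniform noise satisfies $\|\tilde\vartheta_\eps^{*j}\|_\infty\le(2\eps)^{-1}$ for every $j\ge 1$. Since $\p(X_0>u_n,X_j>u_n)=\int_{U_n}\tilde\vartheta_\eps^{*j}(U_n-x-j\alpha)\,dx$, this yields $\p(X_0>u_n,X_j>u_n)\le\l(U_n)^2/(2\eps)$, and hence
\[
n\sum_{j=1}^{\lfloor n/k_n\rfloor}\p(X_0>u_n,X_j>u_n)\le\frac{(n\,\l(U_n))^2}{2\eps\,k_n}\to 0
\]
as $n\to\infty$, because $n\,\l(U_n)\to\tau$ and $k_n\to\infty$. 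The existence of an EVL for $M_n$ with $\bar H(\tau)=e^{-\tau}$ now follows from \cite[Theorem~1]{FF08a}. The only genuine subtlety in the proof is the failure of the naive Fourier estimate for $D'$, which is fortunately circumvented by the elementary Young's-inequality density bound above.
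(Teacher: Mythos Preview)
Your proof is correct, and for $D'(u_n)$ it is genuinely different from---and considerably more economical than---the paper's argument. Both proofs establish $D_2(u_n)$ via Fourier analysis of the random evolution operator: the paper computes explicit coefficient bounds for characteristic functions of intervals (Lemma~\ref{lem:decay-correlations}) and exploits the fact that, for rotations, the relevant events are finite unions of arcs; you instead use the spectral gap $\rho_\eps=\sup_{k\neq 0}|\hat\vartheta_\eps(k)|<1$ together with Parseval and Cauchy--Schwarz to get an $L^2$--$L^2$ bound that applies to the averaged observable $g_{n,\ell}$ without needing to know its shape. Your route is shorter and avoids the side restriction $\eps^2<1-\log 2/\log(2\pi)$ appearing in the paper's lemma.

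The real divergence is in $D'(u_n)$. The paper follows the template of \cite{AFV12}: it introduces an auxiliary scale $\alpha_n$, splits the sum according to whether $R^{\x}(U_n)\le\alpha_n$ or not, and then handles the long-return piece by a refined Fourier estimate in which the factor $(1-\cos 2\pi k|U_n|)/|U_n|$ is squeezed to zero. You bypass all of this with the single observation that the $j$-step noise density on $\mathbb S^1$ has $L^\infty$-norm at most $(2\eps)^{-1}$ uniformly in $j$ (Young's inequality), which immediately gives $\p(X_0>u_n,X_j>u_n)\le\l(U_n)^2/(2\eps)$ and hence the summable bound $(n\l(U_n))^2/(2\eps k_n)\to 0$. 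This works precisely because the unperturbed map is an isometry, so the joint law of $(X_0,X_j)$ is a pure convolution; the paper's more elaborate splitting would be needed for maps with genuine expansion, but for rotations your shortcut is both valid and cleaner. One small caveat: the bound $\|\tilde\vartheta_\eps\|_\infty=(2\eps)^{-1}$ assumes $\eps\le 1/2$ so that the wrapping does not overlap; for larger $\eps$ the wrapped density is still bounded and the argument goes through with a different constant.
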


For ease of exposition, we will change slightly the notation for the random perturbation of $f$. We write them in this way:
 \begin{equation}
 f_{\eps\xi}(x)=x+\alpha+\eps\xi\ \mbox{mod}1
 \end{equation}
 where $\xi$ is a random variable uniformly distributed over the interval $[-1,1]$ and therefore of zero mean.\footnote{With respect to the previous notations, we changed $\Omega_{\eps}$ into $[-1,1]$, $\omega=\eps \xi$, with $\xi\in [-1,1]$ and finally $\vartheta_{\eps}$ becomes $d\xi$ over $[-1,1]$. } Let us observe that
\begin{equation}
 f^j_{\eps\bar{\xi}}(x)=x+j\alpha+\eps(\xi_1+\cdots+\xi_j)
 \end{equation}

  In this case, it is straightforward to check (just by using the definition), that the stationary measure coincides in this case with the Lebesgue measure, $\l$,  on $[0,1]$ and it is therefore independent of $\eps$.

 Let us first establish that the correlation of the right functions decays exponentially fast under the random evolution of the system. 
 
 \begin{lemma}
 \label{lem:decay-correlations}
 Under the assumptions of Theorem~\ref{th:EVLs-rotations}, if $\phi=\chi_A$ and $\psi=\chi_B$, where $\chi$ denotes the characteristic function, $B=\cup_{l=1}^\ell B_l$, for some $\ell\in\N$ and $A, B_1, \ldots, B_\ell\subset \mathbb S^1$ are connected intervals, then
 \begin{equation}
 \label{dec}
 C_{j,\eps}:=\left|\int_0^1 {\mathcal U}_{\eps}^j(\psi) \phi dx -\int_0^1\psi dx \int_0^1\phi dx\right|\leq 4e^{-j\eps^2\log(2\pi)},
 \end{equation}
as long as $\eps^2<1-\log 2/\log(2\pi).$
 \end{lemma}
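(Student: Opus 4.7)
The plan is to exploit Fourier analysis on $\mathbb{S}^1=\R/\Z$, where rotations act diagonally on the basis $e_k(x)=e^{2\pi i k x}$ and the perturbed concatenation $f_{\omega_j}\circ\cdots\circ f_{\omega_1}$ is just the translation by $j\alpha+\varepsilon(\xi_1+\cdots+\xi_j)$. So I would begin by expanding $\psi=\sum_{k\in\Z}\hat\psi_k\,e_k$; using the independence of the $\xi_s$ together with $\E[e^{2\pi i k\varepsilon\xi}]=\sin(2\pi k\varepsilon)/(2\pi k\varepsilon)$ (because $\xi$ is uniform on $[-1,1]$), the definition of $\mathcal{U}_\varepsilon$ gives
\[
\mathcal{U}_\varepsilon^j\psi(x)=\sum_{k\in\Z}\hat\psi_k\,e^{2\pi i k j\alpha}\,\Delta_k(\varepsilon)^{j}\,e_k(x),\qquad \Delta_k(\varepsilon):=\frac{\sin(2\pi k\varepsilon)}{2\pi k\varepsilon}.
\]

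Next I apply Parseval to $\int \mathcal{U}_\varepsilon^j\psi\cdot\phi\,dx$. The $k=0$ contribution is $\hat\psi_0\hat\phi_0=\int\psi\,dx\int\phi\,dx$, which cancels the second term in $C_{j,\varepsilon}$, so
\[
C_{j,\varepsilon}=\Bigl|\sum_{k\ne 0}\hat\psi_k\,\overline{\hat\phi_k}\,e^{2\pi i k j\alpha}\,\Delta_k(\varepsilon)^{j}\Bigr|\le \sum_{k\ne 0}|\hat\psi_k|\,|\hat\phi_k|\,|\Delta_k(\varepsilon)|^{j}.
\]
Since $\phi=\chi_A$ and $\psi=\chi_B$ are characteristic functions of finite unions of intervals, direct integration yields $|\hat\phi_k|\le 1/(\pi|k|)$ and $|\hat\psi_k|\le \ell/(\pi|k|)$, reducing the estimate to the scalar sum $S_j(\varepsilon)=\sum_{k\ne 0}k^{-2}|\Delta_k(\varepsilon)|^{j}$.

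The heart of the proof is bounding $S_j(\varepsilon)$. I would split at a threshold $K=K(\varepsilon)$: on $\{|k|\le K\}$ use the sharp small-argument estimate $|\Delta_k(\varepsilon)|\le 1-c(k\varepsilon)^2$, equivalently $|\Delta_k(\varepsilon)|\le \exp(-c(k\varepsilon)^2)$, so that $|\Delta_k(\varepsilon)|^{j}\le e^{-c j k^2\varepsilon^2}$ and the head of $S_j(\varepsilon)$ is dominated by its $|k|=1$ term, producing the desired $\varepsilon^2$-rate exponential. On $\{|k|>K\}$ use the universal bound $|\Delta_k(\varepsilon)|\le 1/(2\pi|k|\varepsilon)$, so the tail becomes $\sum_{|k|>K}|k|^{-(j+2)}(2\pi\varepsilon)^{-j}$, a rapidly convergent geometric–polynomial series. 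Tuning $K$ to balance the two regimes yields the stated rate $\varepsilon^2\log(2\pi)$; the hypothesis $\varepsilon^2<1-\log 2/\log(2\pi)$ is exactly the condition needed to ensure that the tail series is summable and can be absorbed into the overall constant~$4$.

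The main obstacle is extracting the precise numerical exponent $\log(2\pi)$: any crude use of $|\Delta_k(\varepsilon)|\le 1$ on the low modes destroys all $j$-decay, so one must retain a quantitative form of $\sin x\le x$ and propagate the constant carefully through the splitting and the subsequent geometric bookkeeping. Once that is done, the argument is clean: Fourier inversion diagonalises the random dynamics, averaging the uniform noise produces geometric $|\Delta_k(\varepsilon)|^{j}$ damping on every nonzero mode, and the $1/k^2$ factors coming from the bounded-variation (in fact piecewise constant) structure of $\phi$ and $\psi$ guarantee absolute summability of the Fourier series in play.
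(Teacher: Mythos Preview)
Your approach is essentially identical to the paper's: Fourier-expand $\phi$ and $\psi$, use that averaging the uniform noise produces the sinc factor $\Delta_k(\varepsilon)=\sin(2\pi k\varepsilon)/(2\pi k\varepsilon)$ raised to the $j$-th power, then split the sum at $|k|\sim 1/\varepsilon$ using the Gaussian-type bound $|\Delta_k(\varepsilon)|\le e^{-(k\varepsilon)^2\log(2\pi)}$ on the low modes and $|\Delta_k(\varepsilon)|\le 1/(2\pi|k|\varepsilon)$ on the high modes. The only cosmetic difference is that you (correctly) record the $\ell$-dependence in $|\hat\psi_k|\le \ell/(\pi|k|)$, whereas the paper writes the looser $|\psi_k|\le 1/|k|$; this affects the leading constant but not the method or the rate.
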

\begin{proof}

 We begin by writing the modulus of the correlation integral, $C_{j,\eps}$, as follows:
% $$
%\left|\int_0^1 {\mathcal U}_{\eps}^j\phi \psi dx -\int_0^1\psi dx \int_0^1\phi dx\right|=
% $$
\begin{equation*} 
 C_{j,\eps}=\left|\int_0^1dx\frac{1}{2^j}\int_{-1}^1d\xi_1\cdots\int_{-1}^1d\xi_j \psi(f^j_{\eps\bar{\xi}}(x))\phi-\int_0^1\psi dx \int_0^1\phi dx\right|
 \end{equation*}
% Suppose now that $\psi=\chi_A$ and $\phi=\chi_B$, where $\chi$ denotes the characteristic function and $A$ and $B$ are connected intervals; put, for instance, $A=[a,b]$ with $[a,b]$  included in $[0,1]$. 
We express $\phi$ and $\psi$ in terms of their respective Fourier series:
 \begin{equation*}
 \psi(x)=\sum_{k\in\mathbb{Z}}\psi_k\,\e^{2\pi ikx} \ \  \ \  \phi(x)=\sum_{k\in\mathbb{Z}}\phi_k\e^{2\pi ikx}
 \end{equation*}
Note that given $A=[a,b]$ and $B_l=[a_l,b_l]$, $l=1,\ldots,\ell$, we have 
\begin{equation*}
\phi_k=\int_0^1 \chi_{A}\, \e^{-2\pi i kx} dx = \frac{\e^{-2\pi i ka}-\e^{-2\pi i kb}}{2\pi i k},
\end{equation*} and also 
\begin{equation*}
\psi_k=\int_0^1 \chi_{B} \e^{-2\pi i kx} dx = \frac{\sum_{l=1}^{\ell}\e^{-2\pi i ka_l}-\e^{-2\pi i kb_l}}{2\pi i k},
\end{equation*} 
which implies that 
\begin{equation}
\label{eq:norm-coefficients}
|\psi_k|, |\phi_k|\le 1/|k|.
\end{equation}
Plugging the Fourier series of $\phi, \psi$ into $C_{j,\eps}$, we obtain
\begin{equation*}
C_{j,\eps}=\left|\sum_{k\in\mathbb{Z}/\{0\}}\psi_k\phi_{-k}\frac{\e^{2\pi ikj\alpha}}{2^j}\int_{-1}^1d\xi_1 \e^{2\pi i k \eps\xi_1}\cdots \int_{-1}^1d\xi_j \e^{2\pi i k \eps\xi_j}\right|=\left|\sum_{k\in\mathbb{Z}/\{0\}}\psi_k\phi_{-k}S^j(k\eps)\right|
\end{equation*}
where $ S(x)=\frac{\sin(2\pi x)}{2 \pi x}$. For this quantity we have the bounds: $|S(x)|\le \e^{-x^2\log(2\pi)}, \ \ |x|<1$, and $|S(x)|\le \frac{1}{2\pi |x|}, \ \ |x|\ge 1.$\\
We now continue to estimate $C_{j\eps}$ as
\begin{equation}\label{CI}
C_{j\eps}\le2\sum_{k=1}^{1/\eps}|\psi|_k|\phi_{-k}|\e^{-k^2\eps^2j\log(2\pi)}+\sum_{k=1/\eps}^{\infty}|\psi|_k|\phi_{-k}|\frac{1}{(2\pi k\eps)^j}
\end{equation} 
Using \eqref{eq:norm-coefficients}, we have
\begin{equation*}
C_{j\eps}\le 2 \e^{-\eps^2 j \log(2\pi)}\sum_{k=1}^{1/\eps}\frac{1}{k^2}+\frac{2}{(2\pi)^j}\sum_{k=1/\eps}^{\infty}\frac{1}{k^2}
\end{equation*}
Using the inequality $\sum_{k=A}^B\frac{1}{k^2}\le \frac2A-\frac1B$, we finally have
\begin{equation*}
C_{j,\eps}\le2(2-\eps)\e^{-\eps^2j\log(2\pi)}+4\eps \e^{-j\log(2\pi)}\le 4\e^{-j\eps^2\log(2\pi)}
\end{equation*}
which is surely verified when $\e^{-j\eps^2 \log(2\pi)}>2 \e^{-j\log(2\pi)}$, namely $\eps^2<1-\log 2/\log(2\pi)$.
\end{proof}

\begin{proof}[Proof of Theorem~\ref{th:EVLs-rotations}]
We now check the conditions $D_2(u_n)$ and $D'(u_n)$ using the previous estimate for the exponential decay of random correlations for characteristic functions. We will use a few results established on the afore mentioned paper \cite{AFV12}. We begin with $D_2(u_n)$ and we consider, with no loss of generality, the observable $g(x)=-\log(d(x,z))$, where $d(\cdot,\cdot)$ is the distance on the circle and $z$ is a given point on the circle. The event $U_n:=\{g>u_n\}=\{X_0>u_n\}$ will be the ball $B_{\e^{-u_n}}(z).$ We now introduce the observables
\begin{align*}
\phi(x)&=\chi_{\{ X_0>u_n \}}=\chi_{\{g>u_n\}} \label{def:obs_phi},\\
\psi(x)&=\displaystyle\int \chi_{\{ g,\, g\circ f_{\eps\xi_1}, \,\ldots\,,\, g\circ f^{\ell-1}_{\eps \x}\leq u_n\}}\, d\x^{l-1}.
\end{align*}
It has been shown, in \cite[Sect. 4.1]{AFV12}, that proving condition $D_2(u_n)$ can be reduced to estimating the following correlation
\begin{multline*}
\Big|\int\mu_\varepsilon\big(g>u_n,g\circ f^{t}_{\eps\x}\leq u_n,\,\ldots\,, g \circ f^{t+\ell-1}_{\eps\x} \leq u_n \big)\, d\xi^ \N-\\
\int\mu_{\varepsilon}(g>u_n)\, d\xi^ \N\int\mu_{\varepsilon}\big(g\leq u_n, g\circ f_{\eps\xi_1}\leq u_n,\,\ldots\,, g\circ f_{\eps\x}^{\ell-1}\leq u_n \big)\, d\xi^\N \Big|\\
\end{multline*}

Since all the maps $f_{\eps \xi}$ are globally invertible and linear, the events $(g>u_n)$ and $(g\circ f^{t}_{\eps\x}\leq u_n,\,\ldots\,, g \circ f^{t+\ell-1}_{\eps\x} \leq u_n )$ are a connected interval and a finite union of connected intervals, respectively. Therefore we can apply directly formula (\ref{dec}) with an exponential decay in $t$ so that any sequence $t_n=n^{\kappa}$, with $0<\kappa<1$, will allow to verify the condition $n\gamma(n,t_n)\rightarrow 0$, when $n\rightarrow \infty$.\\The computation of $D'(u_n)$ is a bit more lengthy. Let us fix a sequence $\alpha_n\rightarrow \infty$ so that $\alpha_n=o(k_n)$. Next, we introduce the quantity $R^{\x}(A)$ as the {\em first return of the set} $A$ into itself under the realisation $\x$. As in \cite[Sect.~4.1, just before eq. (4.5)]{AFV12}, we have:
$$
n\sum_{j=1}^{\lfloor n/k_n \rfloor}\p(U_n\cap f_{\x}^{-j}(U_n))\leq n\sum_{\alpha_n}^{\lfloor n/k_n \rfloor}\p(\{(x,\x):x\in U_n,\, f_{\x}^j(x)\in U_n,R^{\x}(U_n)>\alpha_n\})$$ 
\begin{equation}\label{pii}
\quad+n\sum_{j=1}^{\lfloor n/k_n \rfloor}\p(\{(x,\x):x\in U_n, R^{\x}(U_n)\leq\alpha_n\})\end{equation} It can be shown that the measure of all the realisation such that $R^{\x}(U_n)\leq\alpha_n$ is bounded by a constant times $\l(U_n)$ times $\alpha_n$ (we use here the fact that $\eta$ in item (iv) above is $1$ in our case). Therefore the second term in (\ref{pii}) is bounded by a constant times:
$$
\frac{n^2}{k_n}\l(U_n)^2\alpha_n.
$$
This reduces to $\tau^2\frac{\alpha_n}{k_n}$, whose limit is 0, as  $n\rightarrow \infty$, given our choices for $\alpha_n$ and $k_n$. We now consider the first term on the right hand side of the inequality (\ref{pii}). We can obtain the following result:
\begin{equation}\label{pip}
n\sum_{\alpha_n}^{\lfloor n/k_n \rfloor}\p(\{(x,\x):x\in U_n,\, f_{\x}^j(x)\in U_n,R^{\x}(U_n)>\alpha_n\})\le 
n\left\lfloor \frac{n}{k_n}\right\rfloor \l(U_n)^2 +n \sum_{j=\alpha_n}^{\left\lfloor \frac{n}{k_n}\right\rfloor}|C_{j\eps}|
\end{equation}
where the correlation integral is computed with respect to the characteristic function of $U_n$. Next we use the decomposition (\ref{CI}) for $C_{j\eps}$; we also observe that the product of the modulus of the $k$-Fourier coefficient of $\chi_{U_n}$ for itself gives
$$
\frac{1}{2\pi^2k^2}(1-\cos2\pi k \l(U_n)).
$$
Using the notation $|U_n|=\l(U_n)$,
we obtain (note that we discard the first term in (\ref{pip}) since it goes to zero because $n^2|U_n|^2\sim \tau^2$):
\begin{multline*}
(\ref{pip})\le n \sum_{j=\alpha_n}^{\left\lfloor \frac{n}{k_n}\right\rfloor}\Big\{2\sum_{k=1}^{1/\eps}\frac{1}{2\pi^2k^2}(1-\cos2\pi k |U_n|)e^{-k^2\eps^2j\log(2\pi)}\\ +2\sum_{1/\eps}^{\infty}\frac{1}{2\pi^2k^2}(1-\cos2\pi k |U_n|)\frac{1}{(2\pi k\eps)^j}\Big\}.
\end{multline*}
We consider the first term into the bracket: an upper bound can be defined as follows:
$$
n |U_n|\sum_{j=\alpha_n}^{\left\lfloor \frac{n}{k_n}\right\rfloor}2e^{-\eps^2j\log(2\pi)}\sum_{k=1}^{1/\eps}\frac{1}{2\pi^2k^2}\left(\frac{1-\cos2\pi k |U_n|}{|U_n|}\right),
$$
When $n\rightarrow \infty$, $n|U_n|\sim\tau$ and the inner series  goes to zero. The second piece in (\ref{pip}) could be  bounded as before by
$$
n |U_n|\sum_{j=\alpha_n}^{\left\lfloor \frac{n}{k_n}\right\rfloor}\frac{2}{(2\pi)^j}\sum_{1/\eps}^{\infty}\frac{1}{2\pi^2k^2}\left(\frac{1-\cos2\pi k |U_n|}{|U_n|}\right)
$$
which converges to zero as well.\end{proof}

\section{Numerical experiments}

For the numerical computations we will use the approach already described in \cite{faranda2011numerical}. It consists in considering unnormalised maxima selected using the block Maxima approach. Once computed the orbit of the dynamical systems, the series of the $g_i$ observables are divided into $m$ blocks of length $n$ observations. Maxima of the observables $g_i$ are selected for each blocks and fitted to a single family of generalised distribution called GEV distribution with d.f.:

\begin{equation}
F_{G}(x; \nu, \sigma,
\kappa)=\exp\left\{-\left[1+{\kappa}\left(\frac{x-\nu}{\sigma}\right)\right]^{-1/{\kappa}}\right\};
\label{cumul}
\end{equation}

which holds for $1+{\kappa}(x-\nu)/\sigma>0 $, using $\nu \in
\mathbb{R}$ (location parameter), $\sigma>0$ (scale parameter) and ${\kappa} \in \mathbb{R}$ is the shape parameter also called the tail index: when ${\kappa} \to 0$, the distribution corresponds to a
Gumbel type ( Type 1 distribution).  When the index is positive, it corresponds to a Fr\'echet (Type 2 distribution); when the index is negative, it corresponds to a Weibull (Type 3 distribution).
%
%
%as
%scaling constants in place of $b_m$, and $a_m$ \cite{pickands}, 
In  this fitting procedure, as we have seen in \cite{faranda2011numerical}, the
following relations hold:
\begin{align}
\text{for type 1,}&\quad \kappa=0, \qquad \nu=b_n, \qquad \sigma=1/a_n;\label{eq:relation1}\\
\text{for type 2,}&\quad \kappa=1/\alpha, \qquad \nu=b_n+1/a_n, \qquad \sigma=\kappa/a_n;\label{eq:relation2}\\
\text{for type 3,}&\quad \kappa=-1/\alpha, \qquad \nu=b_n-1/a_n, \qquad \sigma=-\kappa/a_n.\label{eq:relation3}
\end{align}
where $a_n$ and $b_n$ are the normalising sequences described above.

For the discrete maps like $f_\omega(x)$, we select a value for $z$ and repeat the following operations for various values $\epsilon$ of the noise intensity:
\vskip 10pt
\begin{enumerate}
\item We construct an empirical physical measure of the system by performing a long run and saving the obtained long trajectory. The realisations $f_{\omega_{n-1}},\cdots,f_{\omega_1}$ are constructed by taking the $\omega_{k}$ with the uniform distribution in $\Omega_{\epsilon}$.
\item We select 500 initial conditions according to the physical measure described above; such initial conditions will be used as initial conditions ($x$ variables) to generate 500 realisations of the stochastic process. 
% distribution with respect to Lebesgue measure and we set it equal to $x$.\textbf{JMF: I changed this sentence according to what I think it was meant. Please, check if I nailed it.}\footnote{In the $1$-D cases considered below the stationary measure will be exactly equal to the Lebesgue measure (rotations and expanding maps) or absolutely continuous w.r.t. it (quadratic maps).   For the attractors the random orbit of Lebesgue almost all points in the basin will accumulate on the attractor.} Moreover the realisations $f_{\omega_{n-1}},\cdots,f_{\omega_1}$ are constructed by taking the $\omega_{k}$ with the uniform distribution in $\Omega_{\epsilon}$.
\vskip 5pt
\item For each of the 500 realisations, we obtain an orbit  containing  $r=m\cdot n=10^6, 10^7$ data.
\vskip 5pt
\item We split the series in $m=1000$ bins each containing $n=1000$ (or $n=10000$) observations. These values are chosen in agreement with the investigations previously carried out in \cite{faranda2011numerical}, since they provide enough statistics to observe EVL for maps which satisfy the  $D_2$ and $D'$ conditions.
\vskip 5pt
\item We fit the GEV distribution for the $g_i, i=1,2,3$ observables and the maxima and minima  and study the behaviour of selected parameters.
\end{enumerate}

In the forthcoming discussion we will present the results for the shape parameters $\kappa(g_i)$ and for the location parameter $\sigma(g_1)$. This choice is convenient, as explained in \cite{lucarini2012extreme, faranda2012generalized}, since these parameters do not depend on the number of observations $n$  and therefore their asymptotic expected value does not change when $r$ is modified. For the one dimensional maps considered we expect to obtain the following asymptotic parameters for  systems satisfying the mixing conditions described in the former sections (see  \cite{faranda2012generalized}):

\begin{equation}
\kappa(g_1)=0 \quad \kappa(g_2)=1/\alpha \quad \kappa(g_3)=-1/\alpha \quad \sigma(g_1)=1
\label{lp}
\end{equation}

The inference procedure follows the strategy described in \cite{faranda2011numerical} where the authors have used a Maximum Likelihood Estimation (MLE) that is well defined when the underlying physical measure is absolutely continuous.  In the applications described in \cite{lucarini2012extreme, faranda2012generalized} an inference via a L-moments procedure has been preferred as it provides reliable values for the parameters even when the d.f. corresponds to fractal or multi fractal measures. However, since the L-moments procedure does not give information about the reliability of the fit\footnote{The L-moments inference procedure does not provide any confidence intervals unless these are derived with a bootstrap procedure which is also dependent on the data sample size \cite{burn2003use}. The MLE, on the other side, allows for easily compute the confidence intervals with analytical formulas \cite{royall1986model}.}, hereby  we exploit the MLE procedure since it helps to highlight the situations where the fitting does not succeed because of the poor data sampling.\\

The following numerical analysis is constructed in a strict correspondence with the theoretical setup: the role of the simulations is to follow step-by-step the proofs of the theorems trying to  reproduce the analytical results. Whenever it occurs, the failure of the numerical approach is analysed in relation to the length of the sample and the intensity of the noise, just to highlight which parameters are crucial for the convergence procedure. 

We  focus on simple discrete maps as they already contain interesting features regarding the convergence issues. We start by analysing the rotations on the 1-D torus where the condition the mixing condition $D_2(u_n)$ is not satisfied, but it can be restored under perturbation. In the case of   expanding maps of the interval, for which the existence of  EVL has been already shown both analytically and numerically, we show that the computation of the EI at periodic points reveals to be  crucial to discriminate whether the system was randomly perturbed or not.  Eventually, the analysis of the H\'enon map, shows how the presence of another  basin of attraction (infinity in this case) may affect the EVL regardless the initial condition chosen.

Besides the convergence issues, one of the main problems that we face regarding the estimation of the EI during the realisation of the numerical simulations  is the fact that the fitting procedure used in  \cite{faranda2011numerical}, ``hides'' the EI. This is the case because, as we mentioned in Remark~\ref{rem:relation-iid-EI}, when quoting  \cite[Corollary~3.7.3]{LLR83}, the normalizing constants $a_n$ and $b_n$ can be chosen (after a simple linear rescaling) so that we get exactly the same limit law $G$ of the corresponding i.i.d. process instead of $G^\theta$. This prevents the detection of an EI $0<\theta<1$. Basically, our original procedure simply selects the best fitting constants from the sample of maxima available, which means that the EI does not surface at all. In order to overcome this difficulty, instead of applying a blind fitting, we use relations (\ref{eq:relation1})--(\ref{eq:relation3}) (more specifically (\ref{eq:relation1})), the knowledge about the stationary measure of the particular systems we considered and the information provided by \cite{FFT10, FFT12, AFV12} to compute a priori the normalizing sequences so that we can capture the value of the EI. This adjustment to the procedure has turned out to be  very effective and the results met completely the theoretical predictions.

\subsection{Rotations}
In this section we describe the results obtained from the numerical investigation of the stochastically perturbed rotation map following the procedure described in Section~\ref{sec:Rotations}.  The results are displayed in Fig.\ref{rot} where the  green lines correspond to experiments where we have chosen a bin length $n=10^4$ , whereas the blue lines refer to experiments where the chosen bin length is $n=10^3$. The red lines indicate the values of the parameters predicted by the theory for a 1-D map satisfying the mixing conditions described above. We set $\epsilon=10^{-p}$ to analyse the role of the perturbations on scales ranging from values smaller than those typical for the numerical noise to $\mathcal{O}(1)$ \cite{faranda2012analysis}.\\
The solid lines display the values obtained by averaging over the 500 realisations of the stochastic process, while the error bars indicate the standard deviation of the sample. Finally, with the dotted lines we indicated the  experiments where less than 70\% of the 500 realisations  produce a statistics of extremes such that the empirical d.f. passes successfully  the non-parametric Kolmogorov-Smirnov test  \cite{lilliefors} when compared to the best GEV fit.\\

Even though the theory guarantees  the existence of EVL for the rotations perturbed with an arbitrarily weak noise (e.g. when compared with the numerical noise), the simulations clearly show that EVLs are obtained when considering small but finite noise amplitudes only when very long trajectories are considered. The quality of the fit improves when larger bins are considered (compare blue and green lines in Fig.\ref{rot}). This is in agreement with the idea that  we should get EVL for infinitely small noises in the limit of infinitely long samples. In our case, EVLs are obtained only for $\epsilon>10^{-4}$, which is still considerably larger than the noise introduced by round-off resulting from double precision, as the round-off procedure is equivalent to the addition to the exact map of a random noise of order $10^{-7}$ \cite{knuth1973art,faranda2012analysis}.\\
This suggests that is relatively hard to get rid of the properties of the underlying deterministic dynamics just by adding some noise of unspecified strength and considering generically long time series: the emergence of the smoothing due to the stochastic perturbations is indeed non-trivial when considering very local properties of the invariant measure as we do here. It is interesting to expand the numerical investigation discussed here in order to find empirical laws connecting the strength of the noise perturbations with the length of the time series needed to observe EVLs.

\subsection{Expanding Maps}
\label{bersec}
In \cite{AFV12} it is proved that expanding maps on the circle admit EVLs when they are perturbed with additive noise (see, in the just quoted reference,  Corollary 4.1 for smooth maps and Proposition 4.2 for discontinuous maps on $I$). The proof is completely different of the one we produced for rotations in Theorem~\ref{th:EVLs-rotations}. The proof there relies on the fact that expanding maps have exponential decay of correlations for BV-functions  against $L^1$ observables. Moreover, as we already anticipated,  the proof is not limited to continuous expanding maps, but it also holds for piecewise expanding maps with a finite number of discontinuities, provided that the map is topologically mixing. It is interesting to point out that under the assumptions (items (i)-(iv)) in the beginning of Subsection~\ref{subsec:rotations-analytical}, we could prove convergence to the $e^{-\tau}$ law with the observables $g_i(d(x,z))$ previously introduced independently of the choice of the point $z$.  We should remind that for deterministic systems and, whenever $z$ is a periodic point of prime period $p$,  the limit law is not $e^{-\tau}$ any more, but rather $e^{-\theta\tau}$, where $0<\theta<1$ is  the {\em extremal index} discussed above. Therefore, obtaining experimentally an extremal index with unitary value for an observable $\varphi(\cdot)$ computed with respect to a periodic point $z$ definitely points to the fact that the system is stochastically perturbed. This should be also  true for general dynamical systems (including non-uniformly
hyperbolic systems) as it concerns general properties of the noise and its ability of destroying cluster dynamics. The last statement implies both theoretical and numerical verifications that will reported elsewhere.

If a non-periodic point $z$ is considered, it seems relevant to investigate whether stochastic and deterministic systems exhibit any sort of differences when looking in detail into the resulting EVLs.
We consider the following map:
 \begin{equation}
 f_{\eps\xi}(x)=3x+ \eps\xi\ \mbox{mod}1
 \end{equation}

perturbed with additive noise as in the rotation case discussed above. The stationary  measure for such a map is the uniform Lebesgue measure on the unit interval independently of the value of $\eps$. The numerical setup is  the same as in the case of the rotations: we begin by taking a non-periodic point as the center of our target set; the results are shown in picture \ref{ber}. It is clear that the stochastic perturbations do not introduce any changes in the type of statistical behaviour observed for a non-periodic point $z$ and no differences are encountered even if the number of observations in each bin is increased. This is compatible with the idea that the intrinsic chaoticity of the map overcomes the effect of the stochastic perturbations. Summarising, extremes do not help us in this case to distinguish the effect of intrinsic chaos and the effect of adding noise.\\

It seems more promising the problem of the form of the EVLs for periodic points. As discussed above, we cannot use  the usual fitting procedure for the GEV, since it always renormalises in such a way that the extremal index seems to be one (see discussion above). Instead, in order to observe extremal indices different from one, we have to fit the series of minimum distances to the exponential distribution by normalising a priori the data. The inference of the extremal index $\theta$ (here already  the extremal index) has been obtained via MLE.  The normalisation applied consists only in multiplying the distances by the factor $2n$ as we deal with a constant density measure. In Fig. \ref{ei} we present the results for the extremal index obtained taking the periodic point $z=1/2$ of prime period 1 for which $\theta=2/3$ \footnote{We recall that  $\theta=\theta(z)= 1 - |\mbox{det} D(f^{-p}(z)|$, where $z$ is a periodic point of prime period $p$, see \cite[Theorem 3]{FFT12}.}. The results  clearly show that we are able to recognise the perturbed dynamics as the extremal index goes to one when $\epsilon$ increases.  Interestingly, the rupture with the values expected in the purely deterministic case is observed only for relatively intense noise. Moreover, when longer time series are considered (green experiment), the stochastic nature of the map becomes evident also for weaker perturbations. Finally, it is clear that the numerical noise (corresponding to $\epsilon \simeq 10^{-7}$) is definitely not sufficiently strong for biasing the statistics of the deterministic system.

\subsection{Quadratic map}

Following the analysis on the extremal index carried out in the previous section, we study numerically the quadratic map 
$$ f_{\eps\xi}(x)=1-ax^2+ \eps\xi $$
in the two cases $a=0.014$ and $a=0.314$. For these values of $a$, the deterministic dynamics is governed by the existence of an attracting fixed point (whose value depends on $a$); for both choices of $a$ the fixed point is taken as $z$ in the experiments. The stationary  measure of such perturbed quadratic maps is absolutely continuous with respect to Lebesgue \cite{Baladi96}. Extremes of this map have been already studied in the unperturbed case in \cite{FF08}, but for chaotic regime ($a$ belonged to the Benedicks-Carleson set of parameters). Here, we proceed exactly as described in the previous section, by fitting the series of minimum distances to an exponential distribution after normalizing the data using the factor $2n$. From the theoretical point of view, the choice of this normalisation comes from assuming that, in the presence of strong noise, the measure becomes smooth in the neighbourhood of the periodic point and therefore it is locally not different with respect to the ternary shift already analysed. We believe that  the EI computed with respect to this normalising constant seems to be the relevant one for the simulations in perturbed systems  since it does not depend on the intensity of the noise but only on parameters that are known from the set up of the experiment.

We did several experiments spanning the noise range described in the previous experiments. We found that, up to the deterministic limit the role of noise is such that we can still obtain $\theta=1$ if we renormalise the series of the minimum distances by using the factor $2n/\eps$ instead of just $2n$. In the deterministic limit we obtain in both cases a result for the EI that depends on the length of the bin. This is probably due to numerical uncertainties intrinsic to operating in double precision.  This effect can be clearly recognised by  repeating the same experiment in single precision - results not shown here - where $\theta$ saturates for values of $\epsilon < 10^{-4}$ because the effect of the single precision numerical noise becomes dominant.

\subsection{Maps with indifferent fixed points}

We consider now maps with slow (polynomial) rates of decay of correlations due to the presence of an indifferent fixed point around which orbits linger for a long time. Examples of maps with such behaviour are the ones usually referred to as Manneville-Pomeau maps introduced in \cite{PM80}, which are given by
$$f(x)=x+x^{1+\alpha} + \eps\xi \mod 1$$ and the ones studied by Liverani-Saussol-Vaienti in \cite{LSV99}, which are given by
$$f(x)= \left\{ \begin{array}{lc}
x(1+2^\alpha x^{\alpha}) + \eps\xi &\mbox{ for }x\in [0,1/2),\\
2x-1 + + \eps\xi &\mbox{ for }x\in [1/2,1].
\end{array} \right.
$$
Under some general conditions we know that these maps
have a unique \emph{physical} measure $\mu_\alpha$, where ``physical'' is used in the sense that for a positive Lebesgue measure set of points, the time average of any continuous observable function computed along its orbits converges to the respective spacial average. It is also well known that when 
 $\alpha \ge 1$, the contact at the fixed point is so strong that $\mu_\alpha$ is the Dirac measure at $0$ while for
$\alpha\in (0,1)$, we have that $\mu_\alpha$ is absolutely continuous with respect to the Lebesgue measure. 
Moreover, for $\alpha\in (0,1)$ 
polynomial decay of correlations has been proved e.g. in 
\cite{Y99}, \cite{LSV99}. In fact, such polynomial upper bounds for the decay of correlations have actually been proved to be sharp. See for example \cite{G04} and \cite{H04}, for lower bounds for the rate of decay of correlations. 

Although these systems present very slow rates of mixing,  one can easily choose a base of induction (like $[1/2,1]$ for the Liverani-Saussol-Vaienti maps, for example) and observe that the first return time induced map  is piecewise expanding. This means that, in our case of interest, the extremal behaviour of the induced map is well established. Moreover, using \cite[Theorem~2.1]{BST03} one can pass the information from the induced system to the original one to conclude that for generic points $z$ we will always observe an EI equal to 1, for all $\alpha\in(0,1)$. In \cite[Theorem~5]{FFT13}, the authors prove in particular a counterpart regarding periodic points of \cite[Theorem~2.1]{BST03}. Hence, we know that for all such maps, at periodic points (except 0), we have an EI less than 1 and given by the derivative at that point and that, at almost every point, there is an EI equal to 1. 
In an ongoing work, the second and the fifth named authors, together with A.C.M. Freitas and M. Todd, using a recent result \cite{H12}, prove that actually, also for these slowly mixing maps, it is still possible to prove a dichotomy which basically says that except at periodic points, the EI is always equal to 1. This includes a deep analysis of the extremal behaviour at the indifferent fixed point.

We highlight the following interesting aspect about these maps with indifferent fixed points.
In order to prove conditions $D_2(u_n)$ or $D^p(u_n)$, we need that the rate function $\gamma(n,t)$ to go to 0 faster than $t^{-1}$. As it can be seen in \cite[Section~5.1]{F12}, we have always used in all examples rates of decay of correlations faster than $t^{-1}$. However, for parameters $\alpha\in(1/2,1)$, as it has been proved in \cite{G04}, for example, the rate of decay of correlations is, instead, not faster than $t^{1-1/\alpha}$
. This suggests that conditions $D_2(u_n)$ or $D^p(u_n)$ cannot be proved, at least from decay of correlations as usual. Nonetheless, using \cite[Theorem~2.1]{BST03} and \cite[Theorem~5]{FFT13} we can still get EVLs for the original systems and for those problematic slow mixing parameters $\alpha\in(1/2,1)$.

The slow convergence to the EVLs for $\alpha\in(1/2,1)$ can be highlighted by the numerical experiments we have performed on  the Pomeau-Manneville map perturbed by additive noise.  In fact, even if the theory states that EVLs exist for the deterministic dynamics when correlations decay  polynomially, when  finite samples are taken into account, it becomes computationally extremely expensive to observe convergence to the theoretical expected parameters as the bin length must be sensibly increased in order to produce sufficiently independent maxima \footnote{see the discussion  and the examples  about the so-called Standard map in \cite{faranda2012generalized}.}. On the contrary, here we show how  the use of additive noise allows for deducing  interesting properties of the dynamics without employing too many computational resources.  

The experiment follows the set-up already described in the former section: 500 realizations of the perturbed mapat $m=1000$ and $n=1000$  have been performed for decreasing noise intensities $p$ and for ten different values of $a(0,1)$. The points $z$ have been chosen on the physical measure obtained by iterating the map starting from random initial conditions. Only the parameters $\kappa$ and $\sigma$ for the observables $g_1$ are represented in Fig. \ref{PM} although the results obtained are analogous for the $g_2$ and $g_3$ parameters (non shown here).  For $a \in (0, 1/2)$  we obtain similar results both in the deterministic regime and in the stochastic one:   the shape parameter is close to zero   and the scale parameter is close to 1 as expected from the theory.  Instead, when $a(1/2,1)$ we still obtain the results just stated for large noise intensities but, in the deterministic limit, divergences appear as an effect of the slow mixing. It is evident that for these range of parameters the stochastic analysis reveals aspects of the dynamics which would have been precluded by simply analysing the deterministic map. In fact, if in the deterministic case ($p>7$), we are unable to fit a GEV distribution already for $a>0.5$, the addition of the noise suggests that the mixing gets slower  and slower as $a \to 1$. In order to explain this, let  us analyse in detail what happens in the cases $a=0.6$ and $a=0.9$. In the former case, we still obtain the results expected for mixing dynamics up to noise intensity of order of $10^{-6}$, whereas in the case $a=0.9$ the signature of the divergence is already present for noise intensity of order $10^{-4}$.

In summary, $\alpha \to 0$ and $\alpha \to 1$ correspond to two very different regimes, the former case behaving exactly like the ternary shift described in section \ref{bersec}, the latter approaching the behaviour of a map where the dynamic is concentrated around a fixed point, as in the quadratic map examples. 

\subsection{Lorenz Maps}

An interesting question to address when considering stochastic perturbations of deterministic maps concerns the cases when complex bifurcation structures affects the dynamics  depending on a certain parameter $a$.  A good test platform in this sense is represented by the so called Lorenz  maps family. A Lorenz map is constructed by projecting the dynamics arising from the well known Lorenz attractor \cite{lorenz1963deterministic} on the two dimensional manifold obtained without taking into account the most stable direction \cite{fiedler2001ergodic}.

Leaving aside the numerical complications deriving by the direct analysis of the Lorenz equations, we stick to the analysis of one dimensional maps  useful to catch most of the features of the Lorenz equations bifurcation diagram. There are several examples of one dimensional Lorenz maps and they are usually described in a piecewise fashion. We focus here our attention on the interval map with two monotonous continuous branches and a discontinuity in between

$$f(x)=(-a+ |x|^a)\cdot \mathrm{sgn}(x) +\eps\xi, $$

which has been widely studied in \cite{bruin1996wild}. For $\eps=0$ the discontinuity is  at 0 and it is of order $a>0 : f(\pm \delta) \sim f(0)\pm \delta^\alpha$. In the classical Lorenz system  we obtain a structure similar to the one produced by this map if $a<1$ such that  the derivative of the map is infinite at the discontinuity  \cite{fiedler2001ergodic}. 

Even if the map show a very  complicated structure of the bifurcation diagrams  - upper panel of Fig \ref{lor} for the parameter $a \in (0, 2)$ - we are able to classify which kind of EVLs hold for most of the values of $a$ by leading back to the examples already discussed:

- For   $a$  such that the deterministic dynamics is governed by the existence of fixed points or periodic trajectories, the results on  EVLs described for the quadratic map holds: for increasing noise we get convergence to the three classical EVLs  whereas in the deterministic case the EVL obviously tends to a Dirac delta.

- For $a$ such that the dynamic is mixing and supported by an absolutely continuous physical measure, we found exactly the same behaviour described in the case of the ternary shifts.

Several numerical experiments performed in various region of the bifurcation diagram - non shown here - have confirmed these claims. 
Hereby we will instead discuss the cases   $a\to 1$ as they  correspond to the change in the nature of the discontinuity at 0 from infinite ($a<1$) to finite ($a>1$). The numerical experiments presented in the middle and lower panel of Fig.  \ref{lor} for the parameters $\kappa(g_1)$ and $\sigma(g_1)$, $m=n=1000$,  inspect the dynamics for five different values of $a$ sampled in the grey region on the bifurcation diagram. The experimental set-up follows the one described for the Pomeau-Manneville case with 500 realizations of the map and points $z$ on the physical measure. 

In the deterministic limit  all the cases analysed follow  the theoretical expectations: the EVLs diverges from the three classical families and the distributions of $g_1$ maxima approach a Dirac delta  distribution as   $\kappa(g_1) \to 1$ and $\sigma(g_1)\to 0$.  In principle, with the addition of noise,   $\kappa(g_1)$ and $\sigma(g_1)$ should respectively converge to $0$ and $1$ - red solid line in the figures. Instead we get  a divergence from the theoretical values  whose magnitude  increases as $a$ increases. For the values of $a\to 1$ the dynamics under the effect of the noise is governed by jumps  among the stable fixed points represented in the area coloured in grey in the bifurcation diagram of Fig. \ref{lor}. These jumps occur in a rather uncontrolled way as  $p<6$ for $a=0.98$ and $p<5$ for $a=0.99$ . This produces the observed divergence and suggests that the addition of the noise must be carefully  implemented.  When the structure of the phase space involves different basins of attraction, noise may cause uncontrolled jumps which, for finite samples, produce divergence from the classical EVLs meaning that geometrical properties of the measure may be wrongly detected as in the case $a=0.99$. Of course the divergence can be reduced by increasing $m$  since by extending the bin length we would be able to collect more statistics around the point $z$ but this may be beyond our technical computational limits. On the other hand, even in this case the addition of noise can provide interesting hints on the dynamics:  for example this kind of divergence can be  used for detecting the proximity of bifurcation points as discussed in in \cite{faranda2012extreme}.

\subsection{Anosov Diffeomorphisms}

An Anosov map on a manifold $M$ is a certain type of mapping, from $M$ to itself, with rather clearly marked local directions of 'expansion' and 'contraction'\cite{franks1971anosov}.  The problem of classifying manifolds that admit Anosov diffeomorphisms has always been  very difficult, and still nowadays has no answer.  Very few classes of Anosov diffeomorphisms are known. The best known example is the so called Arnold's cat map  introduced in  \cite{arnold1968ergodic}:

$$f(x,y)=(2x+y+\eps\xi\,x+y+\eps\xi ) \mod 1$$

The deterministic map acts in such a way that first it stretches a unit square and then use the modulus operation in order to  rearrange the pieces back into the square. It has  a unique hyperbolic fixed point and its eigenvalues are one greater and the other smaller than 1 in absolute value, associated respectively to an expanding and a contracting eigenspace which are also the stable and unstable manifolds. The eigenspace are orthogonal because the matrix is symmetric.  Both the eigenspaces densely cover the torus that represents the physical measure of the map.

It is interesting to perturb this map stochastically and use the extreme value theory described so far to understand if the noise changes something with respect to the properties observed in the deterministic cases. In \cite{faranda2011numerical} the deterministic map has been already analysed by finding the EVLs expected for maps which satisfy conditions $D_2$ and $D'$. The convergence have been observed for the parameters:
\begin{equation}
\kappa(g_1)=0 \quad \kappa(g_2)=1/(2\alpha) \quad \kappa(g_3)=-1/(2\alpha) \quad \sigma(g_1)=1/2
\label{lp}
\end{equation}

where the factor 2 takes care of the dependence on the dimension of the phase space. The set-up is exactly the same described as in Section \ref{bersec} and the results are shown in Fig. \ref{cat}.  As in the case of the ternary shift, nothing changes with the addition of the noise. It is evident that the intrinsic chaoticity of the deterministic dynamics overcome the effect of the noise. 

The experiments provided that the Arnold cat map does not allow for understanding the role of contracting directions as there is no 'trace'  of the stable manifold on the physical measure since it  fills all the bidimensional  torus. In the next section we analyse the H\'enon map where instead the presence of contracting eigenspace  acts on the physical measure which is constrained on a strange attractor.

\subsection{H\'enon attractor}
Finally, we investigate the impact of a stochastic perturbation applied onto a prototypical case of a map possessing a singular physical measure supported on an attractor  whose dimension is smaller than the dimension of the phase space, $d$. For such dynamical systems in \cite{lucarini2012extreme}  we have shown numerically the convergence of maxima for the observables $g_i$ to the three classical EVLs whose parameters ($\kappa, \sigma, \mu$) depend on the scaling exponent of the measure of a ball centred around the point $z$ from which the distance is computed. Such a scaling exponent turned out to be the local dimension $d_L(z)$ of the SRB-measure. For this reason, the equations for the asymptotic parameters presented in Eq. \ref{lp} for the one dimensional and absolutely continuous case,  should be modified as follows:
\begin{equation}
\kappa(g_1)=0 \quad \kappa(g_2)=1/(ad_L(z)) \quad \kappa(g_3)=-1/(ad_L(z)) \quad \sigma(g_1)=1/d_L(z).
\label{Hp}
\end{equation} 
In \cite{lucarini2012extreme} we have also shown that the dimension of the whole attractor can be recovered by averaging over different points $z$ the local dimensions $d_L(z)$ obtained inverting the relations in Eq. \eqref{Hp}.

Hereby, we choose to investigate the properties of the H\'enon map defined as:
\begin{equation}
\begin{array}{lcl}
x_{j+1}&=&y_j +1 -a x_j^2 + \epsilon \xi_j \\
y_{j+1}&=&b x_j,\\
\end{array}
\end{equation}
where we consider the classical parameters $a=1.4$, $b=0.3$ \footnote{We remind that Benedicks and Carleson \cite{BC91} proved that there
exists a set of positive Lebesgue measure $S$ in the parameter
space such that the H\'enon map has a strange attractor whenever $a,b\in S$. The value of $b$ is very small and the attractor
lives in a small neighbourhood of the $x$-axis. For those values
of $a$ and $b$, one can prove the existence of the physical measure and of a stationary measure under additive noise, which
is supported in the basin of attraction and that converges to
the physical measure in the zero noise limit \cite{BV06}. 
It is still unknown whether such results could be extended to the ''historical'' values that we consider here.}. Recall that for the usual H\'enon attractor, points outside the basin of attraction escape  to infinity. 
Hence, when the system is perturbed,  it is natural to expect that, after waiting enough time, the evolution of every initial condition will escape to infinity since the noise lets the orbit explore the whole $d$-dimensional phase space. Moreover, as we have already pointed out in the introduction, since the physical measure is indeed smoothed by the action of the noise, so that the resulting attractor has the same dimension  $d$ of the phase space. Therefore, the EVLs'  parameters depending on the local dimension $d_L(z)$ of the attractor should change value abruptly as soon as the noise is switched on.
We have selected 500 different $z$ on the support of the physical measure,  and a single realisation has been analysed for each of these $z$. For the  deterministic dynamics, such a set up has been used to reconstruct the attractor's dimension by averaging the local dimension $d_L(z)$ derived for each considered $z$ from the EVLs \cite{lucarini2012extreme}.  Although there are no rigorous results on this system, we will such the numerical evidences and issues emerging from the analysis of H\'enon to suggest how to operate in a general case where theoretical results are - for some reasons - unavailable.\\

In the case of purely deterministic dynamics, the EVLs parameters agree with high precision with the theoretical predictions of \cite{lucarini2012extreme}. The numerical experiments performed with varying intensity of the noise forcing (see Fig. \ref{henon}) suggest that, also in this case, the signature of the deterministic dynamics is pretty resilient. We need a rather intense noise to obtain a detectable smearing of the measure, such that our indicators see an absolutely continuous with respect to Lebesgue, invariant measure.  Only when $p\simeq 0.1$ we observe the escaping behaviour to infinity which causes a divergence of the EVL from the deterministic one with unreliable parameters, whereas in all the other cases we recover quite well the dependence on the dimension of the usual H\'enon attractor. Again, when dealing with finite data samples the correct reconstruction of the phase space and of the physical measure depends on the intensity of the noise. The case of H\'enon also calls the attention for the fact that the balls of the perturbed systems keep scaling with the local dimension when the system is weakly perturbed, instead of scaling with the dimension of the phase space. This effect is explained in the paper by Collet \cite{collet1988stochastic}  and is linked to the direction in which the stochastic perturbation operates: we have already seen for the shift map that  the addition of noise in chaotic one dimensional maps does not affect much the system's behaviour at generic points.
The effect of noise is negligible for the components parallel to the unstable manifold, while the effect is definitely stronger along the stable manifold. Introducing a noise acting as a forcing only along the stable manifold would indeed create a stronger smoothing effect. We will try to investigate this possibility in a future publication. The heuristic take-home message we learn from the case of the H\'enon map is that the noise is not an easy fix for the singularity of the invariant measure in the case of dissipative systems, in the sense that its effect becomes noticeable only when its intensity is relatively larger, unless we consider extremely long time series. 

As discussed in the introduction, this is a delicate point related to the practical applicability of the FDT in dissipative statistical mechanical systems. While it is clear that FDT does not apply for systems possessing a singular invariant measure \cite{ ruelle1998general, ruelle2009review, lucarini2008response, lucarini2011a}, some authors \cite{abramov2007blended, lacorata2007fluctuation}  have advocated the practical applicability of the FDT thanks to the smoothing effect of noise. It is clear that, while even an arbitrarily small noise indeed smooths the invariant measure, one may need to accumulate an extraordinarily long statistics record in order to observe it, and, therefore, to be able to apply for real the FDT. Obviously, this is a matter worth investigating on its own. 

\begin{figure}[H] \begin{center}
\advance\leftskip-1.5cm
       \includegraphics[width=0.8\textwidth]{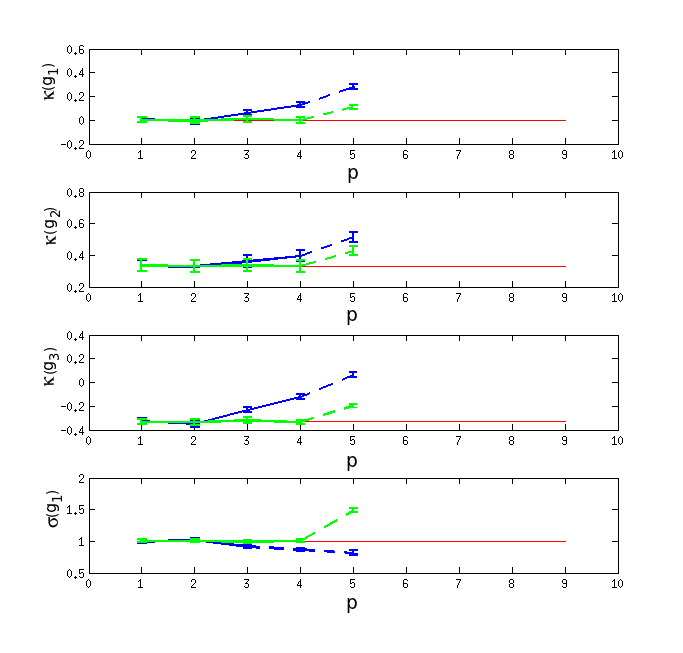}
        \caption{GEV parameters VS intensity of the noise $\epsilon=10^{-p}$ for the circle rotations perturbed map. Blue: $n=10^3$, $m=10^3$, Green: $n=10^4$, $m=10^3$. Red lines: expected values. $ z \simeq 0.7371$. From the top to the  bottom: $\kappa(g_1), \kappa(g_2), \kappa(g_3), \sigma(g_1)$.}
      \label{rot}
    \end{center} \end{figure}

\begin{figure}[H] \begin{center}
\advance\leftskip-1.5cm
       \includegraphics[width=0.8\textwidth]{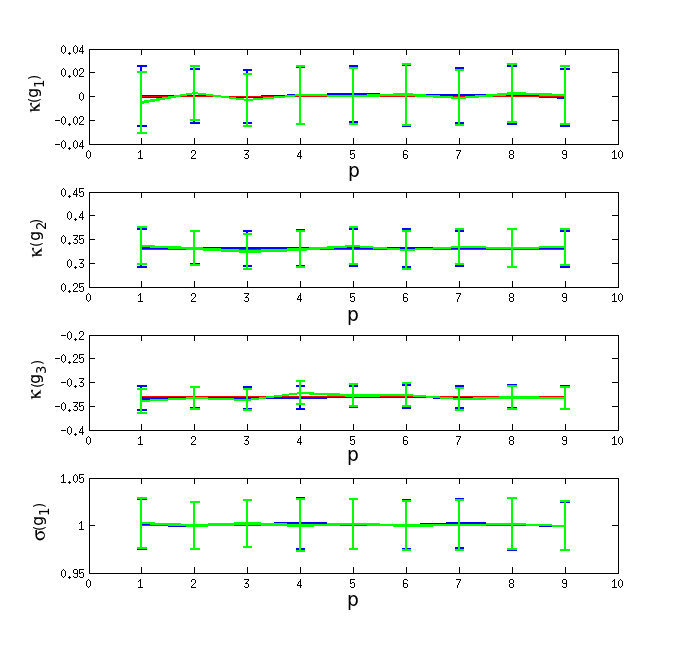}
        \caption{GEV parameters VS intensity of the noise $\epsilon=10^{-p}$ for the ternary shift perturbed map. Blue: $n=10^3$, $m=10^3$, Green: $n=10^4$, $m=10^3$. Red lines: expected values. $z\simeq 0.7371$. From the top to the bottom: $\kappa(g_1), \kappa(g_2), \kappa(g_3), \sigma(g_1)$.}
      \label{ber}
    \end{center} \end{figure}

\begin{figure}[H] \begin{center}
\advance\leftskip-1.5cm
       \includegraphics[width=0.8\textwidth]{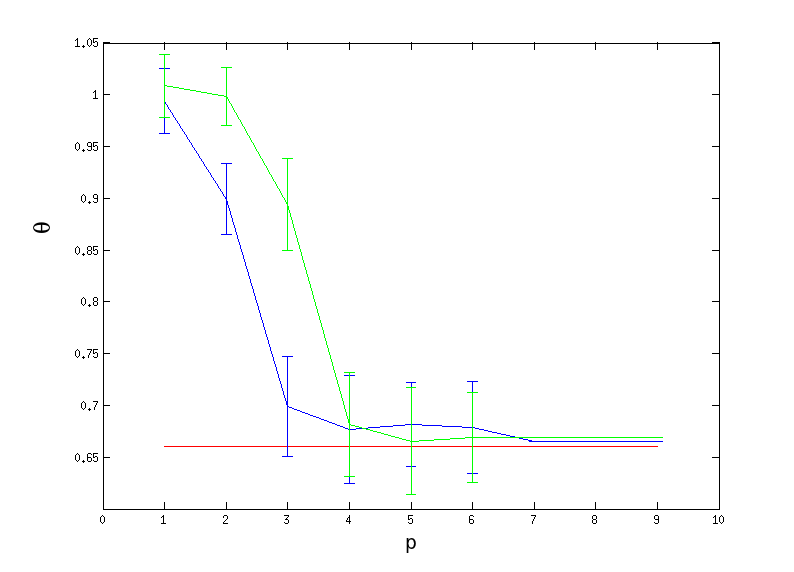}
        \caption{Extremal index $\theta$ VS intensity of the noise $\epsilon=10^{-p}$ for the ternary shift perturbed map. Blue: $n=10^3$, $m=10^3$, Green: $n=10^4$, $m=10^3$. Red line: theoretical $\theta$ for $z=0.5$ of the unperturbed map.}
      \label{ei}
    \end{center} \end{figure}

%\begin{figure}[H] \begin{center}
%\advance\leftskip-1.5cm
%       \includegraphics[width=0.6\textwidth]{}
%        \caption{Extremal index $\theta$ (in log scale) VS intensity of the noise $\epsilon=10^{-p}$ for the quadratic perturbed map with $a$=0.314, $z \simeq 0.72$. Blue: $n=10^3$, $m=10^3$, Green: $n=10^4$, $m=10^3$. }
%      \label{eiq1}
%    \end{center} \end{figure}
%
%\begin{figure}[H] \begin{center}
%\advance\leftskip-1.5cm
%       \includegraphics[width=0.6\textwidth]{}
%        \caption{Extremal index $\theta$ (in log scale) VS intensity of the noise $\epsilon=10^{-p}$ for the quadratic perturbed map with $a$=0.014, $z \simeq 0.98$. Blue: $n=10^3$, $m=10^3$, Green: $n=10^4$, $m=10^3$. }
%      \label{eiq2}
%    \end{center} \end{figure}

\begin{figure}[H] \begin{center}
\advance\leftskip-1.5cm
        \includegraphics[width=0.8\textwidth]{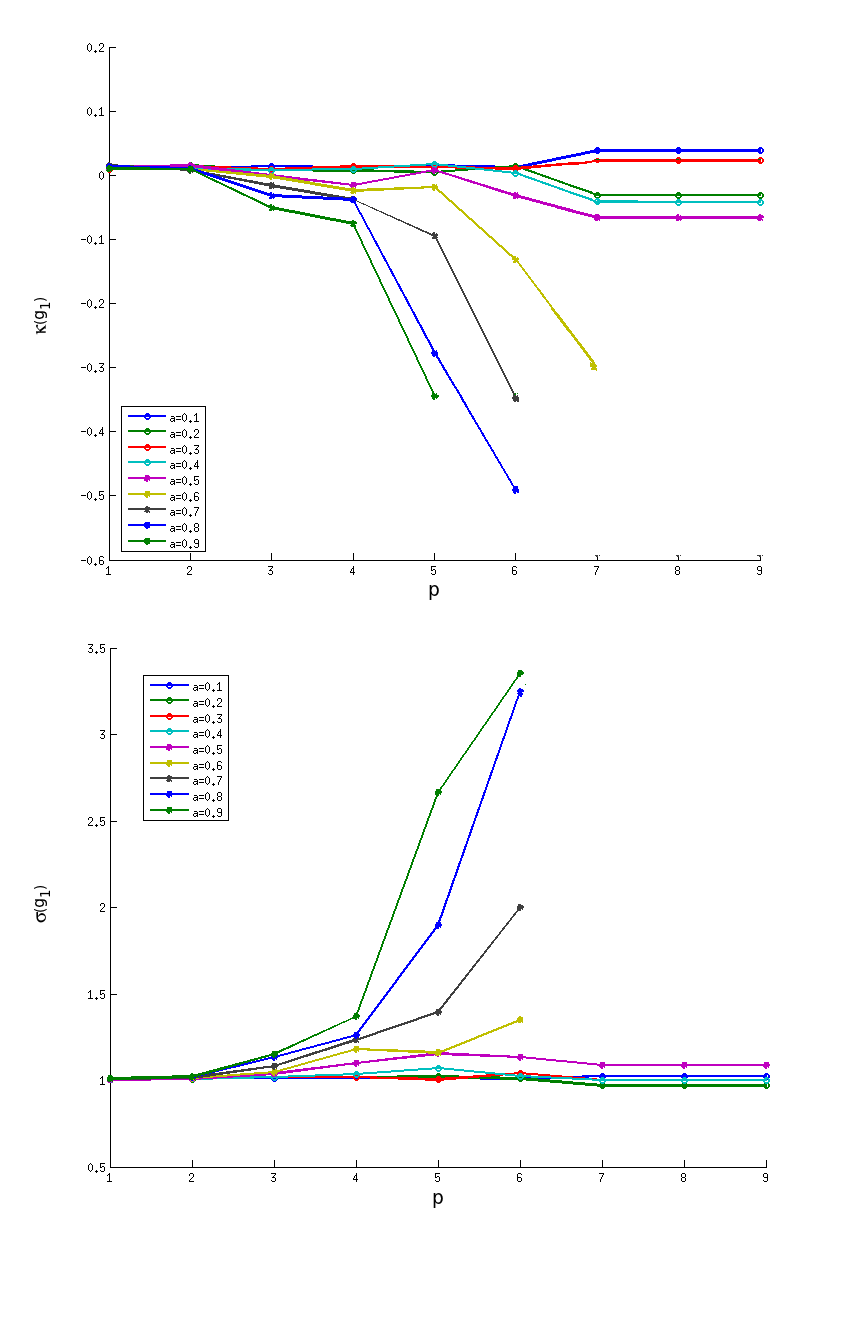}
        \caption{GEV parameters VS intensity of the noise $\epsilon=10^{-p}$ for the Pomeau-Manneville  perturbed map for different values of $\alpha$ (see legend and text for more explanations), $m=n=1000$. From the top to the bottom: $\kappa(g_1), \sigma(g_1)$.}
      \label{PM}
    \end{center} \end{figure}

\begin{figure}[H] \begin{center}
\advance\leftskip-1.5cm
        \includegraphics[width=0.8\textwidth]{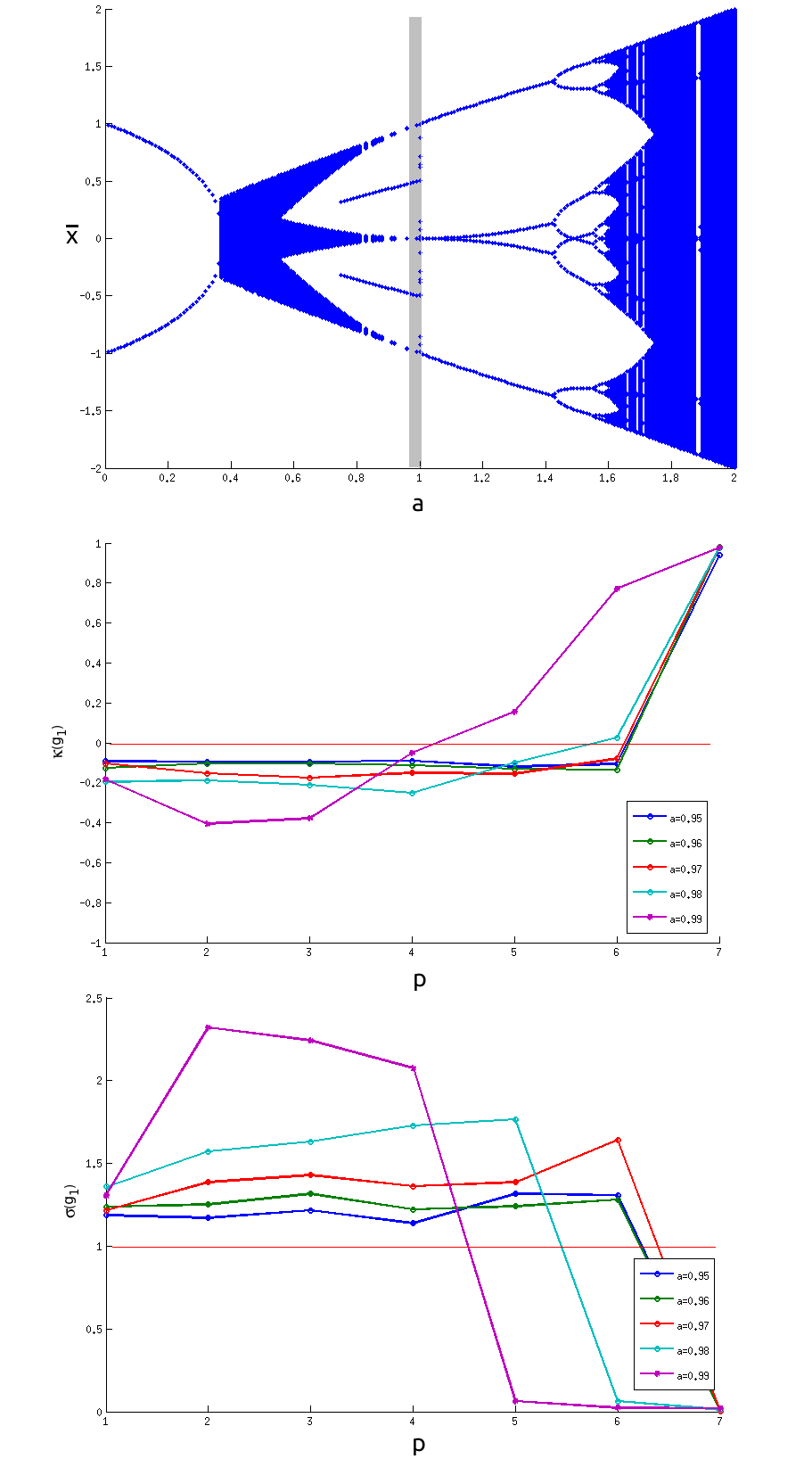}
        \caption{Bifurcation diagram of the Lorenz  deterministic map for $a \in (0 2)$ (upper panel). GEV parameters VS intensity of the noise $\epsilon=10^{-p}$ for the Lorenz map perturbed with additive noise for 5 different values of $a$  chosen in the grey shaded region of the upper plot (see legend and text for more explanations), $m=n=1000$:  $\kappa(g_1)$ (middle panel), $\sigma(g_1)$ (lower panel).}
      \label{lor}
    \end{center} \end{figure}
\begin{figure}[H] \begin{center}
\advance\leftskip-1.5cm
       \includegraphics[width=0.8\textwidth]{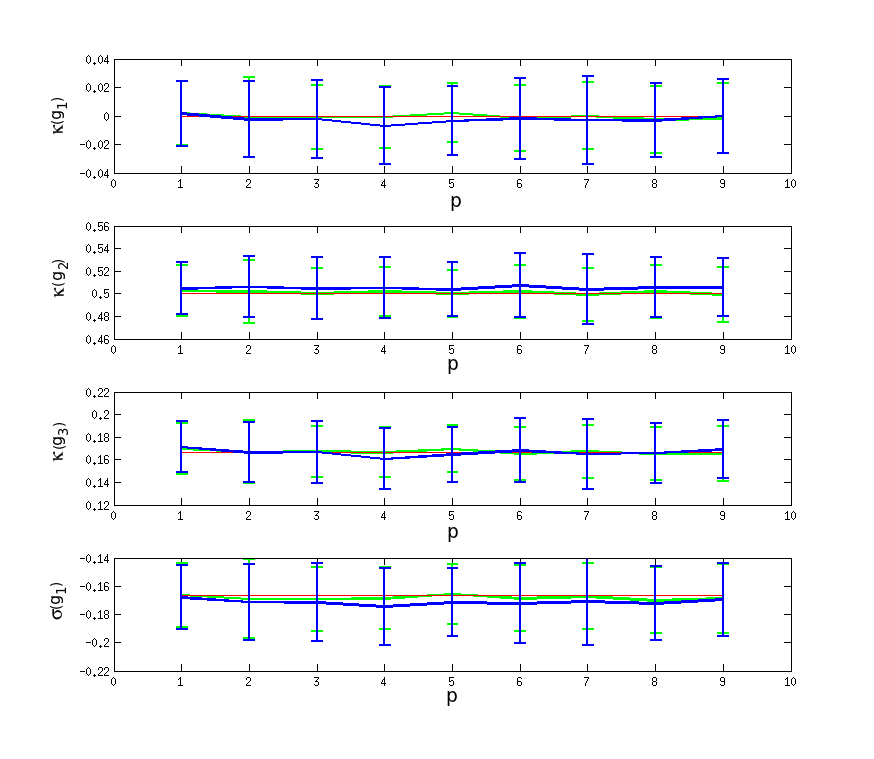}
        \caption{GEV parameters VS intensity of the noise $\epsilon=10^{-p}$ for the Arnold Cat map. Blue: $n=10^3$, $m=10^3$, Green: $n=10^4$, $m=10^3$. Red lines: expected values. From the top to the bottom: $\kappa(g_1), \kappa(g_2), \kappa(g_3), \sigma(g_1)$.}
      \label{cat}
    \end{center} \end{figure}

\begin{figure}[H] \begin{center}
\advance\leftskip-1.5cm
        \includegraphics[width=0.8\textwidth]{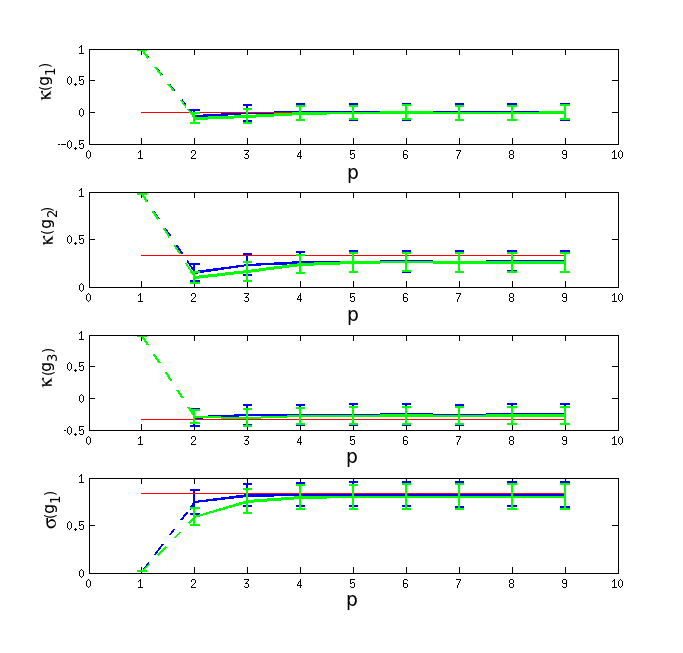}
        \caption{GEV parameters VS intensity of the noise $\epsilon=10^{-p}$ for the H\'enon  perturbed map. Blue: $n=10^3$, $m=10^3$, Green: $n=10^4$, $m=10^3$. Red lines: expected values. $z$ is different  for each realisation. From the top to the bottom: $\kappa(g_1), \kappa(g_2), \kappa(g_3), \sigma(g_1)$.}
      \label{henon}
    \end{center} \end{figure}

\bibliographystyle{abbrv}
\bibliography{NoisyExtremes}

\end{document}